\documentclass[11pt, a4paper]{amsart}
\usepackage{amsmath, amsthm, amssymb}

\usepackage[latin1]{inputenc}
\usepackage[english]{babel}
\selectlanguage{english}
\usepackage{csquotes}

\usepackage{inconsolata}

\usepackage{colonequals}

\usepackage[backend=biber, datamodel=mrnumber, maxbibnames=99, sortcites]{biblatex}
\addbibresource{bibliography.bib}
\usepackage{hyperref}
\hypersetup{
    colorlinks=true,
    linkcolor=blue,
    citecolor=green,
    urlcolor=blue,
}
\usepackage[capitalise]{cleveref}

\usepackage{tikz, tikz-3dplot}
\usetikzlibrary{matrix, arrows, calc, cd}
\usepackage[twoside=false]{geometry}

\usepackage{thmtools}

\declaretheoremstyle[
  spaceabove = 3pt,
  spacebelow = 3pt,
  bodyfont = \itshape,
]{first}
\declaretheoremstyle[
  spaceabove = 3pt,
  spacebelow = 3pt,
]{second}
\declaretheorem[numberwithin=section, style=first]{theorem}

\declaretheorem[sibling=theorem, style=first]{corollary}
\declaretheorem[sibling=theorem, style=first]{lemma}
\declaretheorem[sibling=theorem, style=first]{proposition}

\declaretheorem[sibling=theorem, style=second]{example}
\declaretheorem[sibling=theorem, style=second]{remark}
\declaretheorem[sibling=theorem, style=second]{definition}

\Crefname{thm}{Theorem}{Theorems}
\Crefname{conj}{Conjecture}{Conjectures}
\Crefname{cor}{Corollary}{Corollaries}
\Crefname{lem}{Lemma}{Lemmas}
\Crefname{ex}{Example}{Examples}
\Crefname{rem}{Remark}{Remarks}
\Crefname{defn}{Definition}{Definitions}

\DeclareMathOperator{\Hom}{Hom}
\DeclareMathOperator{\GL}{GL}
\DeclareMathOperator{\id}{id}
\DeclareMathOperator{\sst}{sst}

\DeclareDocumentCommand\repspace{omo}{
	\IfNoValueTF{#1}{
		\mathrm{R}_{#2}
	}
	{
		\mathrm{R}^{#1}_{#2}
	}
	\IfValueTF{#3}{
		(#3)
	}
	{}
}

\newcommand\group[1]{\mathrm{G}_{#1}}
\newcommand\pgroup[1]{\mathrm{PG}_{#1}}

\mathchardef\mhyphen="2D
\newcommand\semistable[1]{#1\mhyphen\mathrm{sst}}
\newcommand\stable[1]{#1\mhyphen\mathrm{st}}
\newcommand\stabSpace[1]{{}^\bot{#1}}
\newcommand\sstCone[1]{\sst(#1)}
\newcommand\wall[1]{W_{#1}}

\newcommand{\Z}{\mathbb{Z}}
\newcommand{\Q}{\mathbb{Q}}
\newcommand{\R}{\mathbb{R}}

\newcommand{\bd}{\mathbf{d}}
\newcommand{\be}{\mathbf{e}}

\renewcommand{\bf}{\mathbf{f}}

\newcommand{\software}[1]{{\small{\sc#1}}}

\title{Finding the walls for quiver moduli}
\author{Hans Franzen}
\author{Gianni Petrella}
\author{Rachel Webb}
\date{\today}

\begin{document}

\begin{abstract}
We give an effective characterisation of the walls
in the variation of geometric invariant theory problem
associated to a quiver and a dimension vector.
\end{abstract}
\maketitle

\section{Introduction}
A quiver $Q$ with vertex set $Q_0$ and a dimension vector $\bd \in \mathbb{Z}^{Q_0}_{\geq 0}$
determine a representation $\repspace{\mathbf{d}}$ of a certain reductive group $G_{\bd}$.
A further choice of stability parameter $\theta \in \mathbb{R}^{Q_0}$
orthogonal to $\bd$ determines open loci of stable and semistable points
\begin{equation}
\repspace[\stable{\theta}]{\mathbf{d}} \subseteq \repspace[\semistable{\theta}]{\mathbf{d}} \subseteq\repspace{\mathbf{d}}.
\end{equation}
Let $\sstCone{\bd}$ denote the locus in the orthogonal complement $\stabSpace{\bd}$
of $\bd$ for which $\repspace[\semistable{\theta}]{\mathbf{d}}$ is nonempty.
We compute the locus of \textit{walls} in $\sstCone{\bd}$, namely the locus of $\theta$ such that
$\repspace[\stable{\theta}]{\mathbf{d}} \subsetneq \repspace[\semistable{\theta}]{\mathbf{d}}$.
Our main result is the following (see \cref{c:artin} and \cref{lem:walls}).
\begin{theorem}\label{t:main-intro}
The walls in $\sstCone{\bd}$ are a union of sets of the form $W_\be$,
where $\be \in \mathbb{Z}^{Q_0}_{\geq 0}$ is a nonzero vector such that $\be \neq \bd$ and $\be_i \leq \bd_i$ for each $i \in Q_0$, and
\begin{equation}
W_\be = \sstCone{\be} \cap \sstCone{\bd - \be}.
\end{equation}
\end{theorem}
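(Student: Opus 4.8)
The plan is to prove the set equality between the wall locus and $\bigcup_\be \wall{\be}$ by establishing both inclusions, working throughout with the module-theoretic description of (semi)stability furnished by King's correspondence (as recorded in \cref{c:artin}). Concretely, for $\theta \in \stabSpace{\bd}$ the normalisation $\theta(\bd)=0$ collapses the slope condition, so a representation $M$ of dimension vector $\bd$ represents a point of $\repspace[\semistable{\theta}]{\bd}$ exactly when $\theta(\dim N)\le 0$ for every subrepresentation $N\subseteq M$, and a point of $\repspace[\stable{\theta}]{\bd}$ exactly when this inequality is strict for every proper nonzero $N$. The first thing I would record is the resulting criterion: $\theta$ is a wall if and only if there is a $\theta$-semistable $M$ of dimension $\bd$ carrying a proper nonzero subrepresentation $N$ with $\theta(\dim N)=0$.

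For the inclusion $\bigcup_\be \wall{\be}\subseteq\{\text{walls}\}$, I would fix $\theta\in\wall{\be}=\sstCone{\be}\cap\sstCone{\bd-\be}$. By definition of the semistable cones there exist $\theta$-semistable representations $M_1,M_2$ of dimension vectors $\be$ and $\bd-\be$; since $\theta(\be)=\theta(\bd-\be)=0$ we have $\theta\in\stabSpace{\bd}$. Setting $M\colonequals M_1\oplus M_2$, of dimension vector $\bd$, and using that a direct sum of slope-zero $\theta$-semistables is again $\theta$-semistable, $M$ represents a point of $\repspace[\semistable{\theta}]{\bd}$. The summand $M_1\subseteq M$ is proper and nonzero (because $\be\neq 0$ and $\be\neq\bd$) with $\theta(\dim M_1)=\theta(\be)=0$, so $M$ is semistable but not stable and $\theta$ is a wall.

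For the reverse inclusion I would begin from a wall $\theta$ and manufacture the destabilising vector. The criterion above yields a $\theta$-semistable $M$ of dimension $\bd$ with a proper nonzero subrepresentation $N$ satisfying $\theta(\dim N)=0$; put $\be\colonequals\dim N$, which is nonzero, satisfies $\be\neq\bd$, and obeys $\be_i\le\bd_i$ for every $i\in Q_0$. It remains to place $\theta$ in both semistable cones. For $\sstCone{\be}$: every subrepresentation of $N$ is a subrepresentation of $M$, so $\theta(\dim N')\le 0$; with $\theta(\be)=0$ this exhibits $N$ as $\theta$-semistable of dimension $\be$. For $\sstCone{\bd-\be}$: a subrepresentation of $M/N$ has the form $N''/N$ with $N\subseteq N''\subseteq M$, and additivity of $\theta$ on the short exact sequence gives $\theta(\dim(N''/N))=\theta(\dim N'')\le 0$; with $\theta(\bd-\be)=0$ this exhibits $M/N$ as $\theta$-semistable of dimension $\bd-\be$. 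Hence $\theta\in\wall{\be}$, completing the argument.

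The module-theoretic manipulations are routine once the dictionary is fixed, so the content that must be secured beforehand is precisely that dictionary: the identification of the GIT (Hilbert--Mumford) semistable locus on $\repspace{\bd}$ with the slope condition on subrepresentations, together with the correct bookkeeping of the orthogonality normalisation $\theta\perp\bd$ that makes slope zero the operative threshold. I expect the step demanding the most care to be the passage between formal dimension-vector identities and genuine geometric points: on the forward inclusion one must know the cones $\sstCone{\be}$ and $\sstCone{\bd-\be}$ are actually inhabited (which is exactly what their definition encodes), and one must check the direct sum $M_1\oplus M_2$ and the quotient $M/N$ give honest points of the relevant representation spaces rather than merely matching dimension vectors. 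Beyond that, the whole proof rests on additivity of $\theta$ on short exact sequences and stability of semistability under direct sums, sub-, and quotient objects of slope zero.
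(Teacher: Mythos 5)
Your proof is correct and follows essentially the same route as the paper: your two inclusions are exactly the paper's proof of \cref{lem:walls} (direct sum of slope-zero semistables in one direction, the sub- and quotient-representation analysis in the other), combined with the immediate reformulation of strict semistability recorded as \cref{c:artin}. The only difference is presentational: the paper isolates the facts you invoke about direct sums, subobjects, and quotients of slope-zero semistables into a single statement, \cref{lem:exact}, and proves it, whereas you cite them as routine while correctly flagging that the subrepresentation-of-a-direct-sum case requires the intersection/image argument rather than assuming subrepresentations split.
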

\noindent
In \cref{t:semistable_generic} we recall an existing algorithm,
based on~\cite{Schofield:92} and \cite[Theorem~3]{DW:00}, for computing $\sstCone{\bd}$.
This gives an effective method for computing the walls.

Our result has several applications.
The first is to the problem of variation of geometric invariant theory (GIT) associated to $Q$ and $\bd$: in this setting,
two stability parameters~$\theta,\eta$ in the orthogonal complement of $\bd$
are \textit{GIT equivalent} if their loci of semistable points are equal.
\cref{t:main-intro} yields an effective algorithm for determining when $\eta$ and $\theta$ are GIT equivalent (see \cref{C:GIT}).
\begin{theorem}\label{t:GIT-intro}
The stability parameters $\theta, \eta$ are GIT equivalent if and only if, for every $\be \leq \bd$,
the convex hull of $\theta$ and $\eta$
is either contained in $W_\be$,
or does not meet $W_\be$.
\end{theorem}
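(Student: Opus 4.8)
The plan is to encode $\theta$-semistability of a fixed representation $M$ of dimension $\bd$ as membership of $\theta$ in the closed convex cone $C(M)=\{\theta\in\stabSpace{\bd}:\theta\cdot\underline{\dim}N\le0\text{ for all }N\subseteq M\}$, so that $\repspace[\semistable{\theta}]{\bd}=\{M:\theta\in C(M)\}$ and $\theta,\eta$ are GIT equivalent precisely when they lie in exactly the same cones $C(M)$. I would use two standard facts: first, that a subrepresentation $N\subseteq M$ with $\theta\cdot\underline{\dim}N=0$ inside a $\theta$-semistable $M$ forces both $N$ and $M/N$ to be $\theta$-semistable; and second, that each $W_\be$ lies in the hyperplane $H_\be=\{\theta\in\stabSpace{\bd}:\theta\cdot\be=0\}$, since a $\theta$-semistable representation of dimension $\be$ requires $\theta\cdot\be=0$, and moreover $W_\be\subseteq\sstCone{\bd}$, by taking a direct sum. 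The theorem then splits into the two implications below.

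For the implication $\Leftarrow$, assume the convex-hull condition and suppose, for contradiction, that $\theta\not\sim\eta$; say $M$ is $\theta$-semistable but not $\eta$-semistable. Writing $\theta_t=(1-t)\theta+t\eta$ (still orthogonal to $\bd$), the set $\{t\in[0,1]:\theta_t\in C(M)\}$ is a closed subinterval $[0,t_0]$ with $t_0<1$. At $t_0$ some subrepresentation $N\subseteq M$ becomes binding: its dimension vector $\be=\underline{\dim}N$ satisfies $\theta_{t_0}\cdot\be=0$ while $\theta_t\cdot\be>0$ for $t$ slightly larger, so $(\eta-\theta)\cdot\be>0$, whence $0<\be<\bd$ and $\eta\cdot\be>0$. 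By the first standard fact $N$ and $M/N$ are $\theta_{t_0}$-semistable, so $\theta_{t_0}\in\sstCone{\be}\cap\sstCone{\bd-\be}=W_\be$, and the segment meets $W_\be$. But $\eta\cdot\be>0$ gives $\eta\notin H_\be\supseteq W_\be$, so the segment is not contained in $W_\be$, contradicting the hypothesis. Hence $\theta\sim\eta$.

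For the implication $\Rightarrow$ I would argue the contrapositive: given $\be\leq\bd$ with $[\theta,\eta]\cap W_\be\neq\emptyset$ and $[\theta,\eta]\not\subseteq W_\be$, I must produce a representation semistable for exactly one of $\theta,\eta$. Fix $\theta_s\in W_\be$ and choose $\theta_s$-semistable $N,L$ of dimensions $\be,\bd-\be$. The cleanest case is when the segment leaves $H_\be$ and $\theta_s$ is an endpoint, say $\theta=\theta_s$: then $M=N\oplus L$ is $\theta$-semistable of dimension $\bd$, whereas at the other endpoint $\eta\cdot\be\neq0$ destabilises either the subrepresentation $N$ or the subrepresentation $L$, so $M$ is not $\eta$-semistable.

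The main obstacle is the general position of the meeting point. When the segment crosses $H_\be$ transversally at an interior $\theta_s$ (so $\theta\cdot\be<0<\eta\cdot\be$) the direct sum $N\oplus L$ is semistable at neither endpoint, and I would instead realise the change by a non-split extension $0\to N\to M\to L\to0$: such an $M$ has no complementary dimension-$(\bd-\be)$ subrepresentation, so I expect it to be $\theta$-semistable, while its subrepresentation $N$ makes it $\eta$-unstable because $\eta\cdot\be>0$. Producing this extension, that is, guaranteeing $\Hom$–$\mathrm{Ext}^1$ behaviour with $\mathrm{Ext}^1(L,N)\neq0$ and controlling the sub-dimension vectors of $M$ so that it is genuinely $\theta$-semistable, is the crux; I would extract it from the theory of general representations and general subrepresentations (Schofield and \cite{DW:00}) underlying \cref{t:semistable_generic}, which is exactly what detects that $W_\be$ is a genuine wall. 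The subtlest remaining configurations are those in which the segment lies inside $H_\be$ but enters and leaves the convex cone $W_\be$: there one must track how semistability of the dimension-$\be$ sub- or quotient-representation varies across the relative boundary of $W_\be$ within $H_\be$, again using the same general-representation input. Once this wall-crossing step is in place, combining it with the $\Leftarrow$ argument yields the stated equivalence.
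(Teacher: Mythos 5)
Your backward implication ($\Leftarrow$) is correct, and it is in fact more self-contained than the paper's treatment: encoding $\theta$-semistability of a fixed $M$ as membership in the cone $C(M)$, which is polyhedral because only finitely many subdimension vectors occur among subrepresentations of $M$, locating the exit parameter $t_0<1$ of the segment from $C(M)$, and then applying \cref{lem:exact} and \cref{lem:walls} to conclude $\theta_{t_0}\in W_\be$ while $\eta(\be)=(1-t_0)(\eta-\theta)(\be)>0$ places $\eta$ outside $H_\be\supseteq W_\be$ --- a clean contradiction with the hypothesis (and $(\eta-\theta)(\be)>0$ automatically rules out $\be=\mathbf{0}$ and $\be=\bd$). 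The paper instead obtains both implications at once from \cref{t:main} together with the fan structure of GIT classes from \cite[Theorem~3.2]{AH:09}, so here your route is genuinely more elementary.

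The forward implication ($\Rightarrow$), however, has a real gap, and the construction you sketch does not close it. You want a nonsplit extension $0\to N\to M\to L\to 0$, built from $\theta_s$-semistable representations at a single interior crossing point $\theta_s$, to be $\theta$-semistable at the endpoint. This fails in general for two reasons. First, $\theta$ may be separated from $\theta_s$ by other walls $W_{\be'}$ with $\be'\neq\be$, and no representation manufactured at $\theta_s$ can be expected to survive those crossings; your argument fixes one crossing and ignores the rest of the segment. Second, even for $\theta$ arbitrarily close to $\theta_s$, killing the splitting only removes $L$ as a direct summand: $M$ may still contain subrepresentations of some dimension vector $\bf$ with $H_{\bf}=H_\be$ but $\theta(\bf)>0$ (the role played by $\bd-\be$), and excluding these requires exactly the general-representation input you defer to --- at which point you are re-proving \cref{t:main}. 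Your final case (segment inside $H_\be$ crossing the relative boundary of $W_\be$) is likewise left open. The missing ingredient is the convexity of GIT equivalence classes, \cite[Theorem~3.2]{AH:09}, which the paper uses (via \cref{t:main} and \cref{l:sst_locus_gets_smaller_in_neighborhood}) and which lets one avoid constructing any distinguishing representation: if $\theta\sim\eta$, convexity forces $\theta_t\sim\eta$ for all $t$; if some $\theta_s\in[\theta,\eta]$ lies in $W_\be$, take the witness $M$ with subrepresentation $N$ of dimension vector $\be$ --- then $M$ is $\theta_t$-semistable for every $t$, so the affine function $t\mapsto\theta_t(\be)$ is $\leq 0$ on $[0,1]$ and vanishes at $\theta_s$. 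Either it vanishes identically, in which case every $\theta_t$ lies in $W_\be$ by definition and the segment is contained in the wall, or $\theta_s$ is an endpoint, say $\theta_s=\theta$ with $\eta(\be)<0$, and then your own direct sum $N\oplus M/N$ is $\theta$-semistable but destabilized at $\eta$ by $M/N$ since $\eta(\bd-\be)>0$, contradicting equivalence. With convexity imported your endpoint case plus this bookkeeping completes the proof; without it, the crux you flagged remains open.
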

\noindent
The set of GIT equivalence classes determines a fan, called the GIT fan of $(Q, \bd)$.
This is a fan in the orthogonal complement to $\bd$ with support equal to $\sstCone{\bd}$.
In \cref{C:fan} we give an effective algorithm for computing this fan.

For the second application, motivated by physics literature,
we say that the pair $(Q, \bd)$ \textit{has a geometric phase}
if there exists a choice of $\theta$
for which~$\repspace[\semistable{\theta}]{\mathbf{d}}=\repspace[\stable{\theta}]{\mathbf{d}}$
and this set is nonempty.
The following also appears as \cref{T:phase}.
\begin{theorem}
\label{alphathm:geometric-phase}
The pair $(Q, \bd)$ has a geometric phase
if and only if $\bd$ is an indivisible Schur root of $Q$, or equivalently
if and only if $\sstCone{\bd}$ is full-dimensional and all walls $W_\be$ have codimension one.
\end{theorem}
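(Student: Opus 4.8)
The plan is to prove the cycle
\[
\text{geometric phase}\ \Rightarrow\ \bd\text{ indivisible Schur root}\ \Rightarrow\ \bigl(\sstCone{\bd}\text{ full-dimensional, all }W_\be\text{ of codimension one}\bigr)\ \Rightarrow\ \text{geometric phase},
\]
built on the wall description of \cref{t:main-intro}. The basic translation is that, by the definition of a wall, $(Q,\bd)$ has a geometric phase if and only if some $\theta\in\sstCone{\bd}$ avoids every $W_\be$; since there are only finitely many admissible $\be$ (those with $\be\neq 0$, $\be\leq\bd$, $\be\neq\bd$), this says precisely that $\sstCone{\bd}\not\subseteq\bigcup_\be W_\be$.

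The implication from (C) to a geometric phase is the easy one. Since $\sstCone{\be}\subseteq\stabSpace{\be}$, each wall satisfies $W_\be=\sstCone{\be}\cap\sstCone{\bd-\be}\subseteq\{\theta\in\stabSpace{\bd}:\theta\cdot\be=0\}$, a subspace of $\stabSpace{\bd}$ that is a proper hyperplane precisely when $\be$ is not proportional to $\bd$. If $\sstCone{\bd}$ is full-dimensional and each $W_\be$ has codimension one, then $\bigcup_\be W_\be$ is a finite union of nowhere-dense closed cones and cannot cover $\sstCone{\bd}$; any $\theta$ in the interior avoiding the walls gives $\repspace[\stable{\theta}]{\bd}=\repspace[\semistable{\theta}]{\bd}\neq\emptyset$.

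For geometric phase $\Rightarrow$ indivisible Schur root I would argue in two parts. A $\theta$-stable representation is simple in the abelian category of $\theta$-semistable representations of slope zero, hence a brick; as the brick locus $\{\dim\mathrm{End}=1\}$ is open in the irreducible variety $\repspace{\bd}$, the existence of one stable representation forces the generic representation to be a brick, so $\bd$ is a Schur root. Indivisibility is the crux. I would establish the scale-invariance $\sstCone{m\be}=\sstCone{\be}$ for every $m\geq 1$: the inclusion $\supseteq$ is taking direct sums, and for $\subseteq$ I would use the numerical description of $\sstCone{}$ recalled in \cref{t:semistable_generic}, namely that the generic representation of $\be$ is $\theta$-semistable iff $\theta\cdot\be'\leq 0$ for every general subdimension vector $\be'\hookrightarrow\be$ (equivalently $\mathrm{ext}(\be',\be-\be')=0$), combined with the observation that $\be'\hookrightarrow\be$ implies $m\be'\hookrightarrow m\be$, because $\mathrm{ext}(m\be',m(\be-\be'))\leq m^2\,\mathrm{ext}(\be',\be-\be')=0$, the left-hand side being the minimal extension dimension and the bound being witnessed by $m$-fold direct sums. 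Granting this, if $\bd=n\be_0$ with $n\geq 2$ and $\be_0$ primitive, then $W_{\be_0}=\sstCone{\be_0}\cap\sstCone{(n-1)\be_0}=\sstCone{\be_0}=\sstCone{\bd}$, so the single wall $W_{\be_0}$ already exhausts $\sstCone{\bd}$ and no geometric phase can exist; contrapositively, $\bd$ is indivisible.

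Finally, for indivisible Schur root $\Rightarrow$ (C): since $\bd$ is a Schur root its generic representation is a brick $M$ that is $\theta$-stable for some $\theta$ (King), and the strict inequalities $\theta\cdot\be'<0$, over the finitely many subrepresentation dimensions $\be'$ of $M$, define a nonempty open subcone of $\stabSpace{\bd}$ contained in $\sstCone{\bd}$, which is therefore full-dimensional. For the walls, indivisibility guarantees that no admissible $\be$ is proportional to $\bd$, so every $W_\be$ lies in the proper hyperplane $\{\theta\cdot\be=0\}$ and has codimension at least one. The step I expect to be the main obstacle is upgrading this to codimension \emph{exactly} one for the nonempty walls: here I would show that a nonempty $W_\be$ meets the relative interior of that hyperplane by checking that a generic strictly semistable representation over $W_\be$ deforms in a family transverse to no further constraint. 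Controlling this transversality, i.e. ensuring that $\bigcup_\be W_\be$ is pure of codimension one rather than merely of codimension at least one, is where the finer geometry of $\sstCone{\be}$ and $\sstCone{\bd-\be}$ and the Schofield--Derksen--Weyman inequalities must be handled with care.
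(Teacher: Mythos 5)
Your cycle matches the paper's proof of \cref{T:phase} in its skeleton, and most of the individual steps coincide: your scale-invariance $\sstCone{m\be}=\sstCone{\be}$ is exactly the paper's \cref{L:saturation}, which the paper proves via the equality $\operatorname{ext}(k\be,k\bf)=k^2\operatorname{ext}(\be,\bf)$ (\cref{R:ext-linear}) where you use the inequality $\operatorname{ext}(m\be',m(\be-\be'))\leq m^2\operatorname{ext}(\be',\be-\be')$ witnessed by direct sums --- both are fine, since only vanishing is needed. Your divisibility argument ($W_{\be_0}=\sstCone{\bd}$ when $\bd=n\be_0$) and your full-dimensionality argument (an open cone of parameters making the generic representation stable) are the paper's, except that the paper derives generic stability from Schofield's strict inequality $\langle\bd,\be\rangle-\langle\be,\bd\rangle<0$ at the canonical parameter $\theta_\bd$ rather than citing King. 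The one genuinely different route is your treatment of the Schur-root direction: you argue directly that a geometric phase yields a stable, hence brick, representation, and openness of the brick locus makes the generic representation indecomposable. The paper instead argues the contrapositive within its wall formalism: if $\bd$ is not Schur, the generic decomposition yields \emph{generic} proper subdimension vectors $(\be,\bd-\be)$, forcing $\sstCone{\bd}\subseteq H_\be$ and then $W_\be=H_\be\cap\sstCone{\bd}=\sstCone{\bd}$ by \cref{L:3.3}. Your version is shorter but imports semicontinuity of $\dim\operatorname{End}$ and the fact that stable representations are bricks; the paper's version buys uniformity, since both failure modes (divisible, non-Schur) are shown to produce a single wall exhausting $\sstCone{\bd}$.

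The ``main obstacle'' you flag at the end --- upgrading ``codimension at least one'' to ``codimension exactly one'' for every nonempty wall --- is a phantom created by the loose phrasing of the statement, and you should delete that paragraph rather than try to fill it. The precise form of condition (iii) in \cref{T:phase} is that every wall $W_\be$ \emph{spans a proper subspace} of $\stabSpace{\bd}$, i.e.\ has positive codimension; purity in codimension one is genuinely false: in \cref{ex:1} the vector $\bd=(1,1,1)$ is an indivisible Schur root, yet $W_{(1,0,1)}$ is the origin, of codimension two in the two-dimensional space $\stabSpace{\bd}$. Fortunately nothing in your cycle needs the stronger claim: your implication from (C) to the existence of a geometric phase uses only that the walls form a finite union of nowhere-dense closed cones, which codimension at least one already guarantees, and your indivisibility argument for that codimension bound ($\be$ not proportional to $\bd$, so $H_\be$ is a proper hyperplane) is exactly the paper's. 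With the last paragraph removed and (C) read as in \cref{T:phase}(iii), your proof is complete and correct.
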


The third application is to better understand the \textit{wall system}
and \textit{special subdimension vectors} introduced in \cite[Section~3]{HP:02}
to study the GIT fan of quivers without oriented cycles.
The wall system is the smallest set of hyperplanes that contain those $W_\be$
of positive codimension in $\sstCone{\bd}$, and it is shown in \cite[Lemma~3.3]{HP:02}
that every hyperplane in the wall system is orthogonal to a special subdimension vector.
We give a recursive characterisation
of the special subdimension vectors
of $(Q, \bd)$ in \cref{L:special}.
Its use in \cref{ex:walls-strictly-contained-hyperplanes}
implies the following result.
\begin{theorem}
Even for acyclic quivers,
not every special subdimension vector
is orthogonal to a hyperplane in the wall system,
and not every hyperplane in the wall system
is contained in the union
of non-maximal cones in the GIT fan.
\end{theorem}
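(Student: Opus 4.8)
The plan is to prove the statement by exhibiting a single counterexample, since both assertions are purely existential: it suffices to produce one acyclic quiver $Q$ and one dimension vector $\bd$ for which (i) some special subdimension vector fails to be orthogonal to any hyperplane of the wall system, and (ii) some hyperplane of the wall system is not contained in the union of the non-maximal cones of the GIT fan. This is exactly what \cref{ex:walls-strictly-contained-hyperplanes} is meant to provide. I would begin by searching among small acyclic quivers --- a handful of vertices and arrows --- for which $\sstCone{\bd}$ is full-dimensional in $\stabSpace{\bd}$ yet the wall structure is combinatorially rich enough to separate the notion of \emph{special subdimension vector} from the notion of \emph{wall-system hyperplane}.

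For a candidate $(Q, \bd)$ I would assemble the full combinatorial picture as follows. First, for every subdimension vector $\be$ with $0 \neq \be \neq \bd$ and $\be \leq \bd$, compute the cones $\sstCone{\be}$ and $\sstCone{\bd - \be}$ via the algorithm of \cref{t:semistable_generic}, and form $\wall{\be} = \sstCone{\be} \cap \sstCone{\bd - \be}$; by \cref{t:main-intro} these sets exhaust the walls. Second, record which $\wall{\be}$ are of codimension one in $\sstCone{\bd}$: the hyperplanes $\be^\bot$ spanned by these are precisely the wall system. Third, use the recursive criterion of \cref{L:special} to list all special subdimension vectors. Fourth, assemble the GIT fan using \cref{t:GIT-intro} and \cref{C:fan}, thereby identifying its maximal (chamber) cones and its lower-dimensional faces.

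With this data in hand, both assertions become matters of inspection. For the first, I would exhibit a special subdimension vector $\be_0$ whose wall $\wall{\be_0}$ is empty or of codimension at least two, so that the hyperplane $\be_0^\bot$ is never forced into the wall system; this is consistent with \cite[Lemma~3.3]{HP:02}, which asserts only the reverse containment. For the second, I would exhibit a wall-system hyperplane $\be_1^\bot$ together with a point $\theta_0 \in \be_1^\bot$ at which $\repspace[\stable{\theta_0}]{\bd} = \repspace[\semistable{\theta_0}]{\bd} \neq \emptyset$; such a $\theta_0$ lies in the interior of a maximal cone of the GIT fan even though it sits on the hyperplane $\be_1^\bot$, witnessing that $\be_1^\bot$ properly overshoots the union of non-maximal cones. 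The point is that $\theta_0 \cdot \be_1 = 0$ does not by itself force $\theta_0 \in \wall{\be_1}$, because the wall condition becomes inactive once, say, $\repspace[\semistable{\theta_0}]{\be_1}$ or $\repspace[\semistable{\theta_0}]{\bd - \be_1}$ is empty.

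The main obstacle, and the heart of the verification, is the second assertion: showing that the chosen $\theta_0$ is genuinely GIT-generic. This requires checking that $\theta_0$ avoids $\wall{\be}$ for every subdimension vector $\be \leq \bd$ simultaneously, not merely for $\be = \be_1$ --- in other words, that the only hyperplane of the form $\be^\bot$ passing through $\theta_0$ is $\be_1^\bot$, and that $\theta_0$ misses the actual wall $\wall{\be_1}$ sitting inside it. This is a finite but delicate case analysis over all subdimension vectors, for which I would rely on the effective description of each $\sstCone{\be}$ from \cref{t:semistable_generic}; once this is confirmed, together with the enumeration of special vectors via \cref{L:special}, both failures follow at once.
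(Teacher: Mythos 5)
Your strategy is, in outline, the paper's own: both claims are existential, and the paper proves them by computing a single acyclic example, \cref{ex:walls-strictly-contained-hyperplanes}, with exactly the toolkit you list (\cref{t:semistable_generic} and \cref{lem:walls} for the walls, \cref{L:special} for the special subdimension vectors, \cref{C:fan} for the fan). The genuine gap is that your proposal never gets past the blueprint stage: no quiver, no dimension vector, no special subdimension vector, and no wall-system hyperplane is ever exhibited, and every substantive step is deferred (``I would search'', ``I would exhibit'', ``I would rely on''). For a theorem of the form ``there exists an acyclic $(Q,\bd)$ such that\dots'', the witness together with its verified combinatorial data \emph{is} the proof; correctly describing how one would find and check a witness does not discharge the claim. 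The paper's witness is the quiver with vertices $1,2,3,4$, arrows $1\to 2$, $2\to 3$, $3\to 4$, $1\to 3$, $1\to 4$, and $\bd=(1,1,1,1)$: the computation shows that $\be=(0,1,0,1)$ is special while $H_{\be}$ is not in the wall system (first claim), and that for $\be=(0,1,1,0)$ or $(0,0,1,0)$ the wall $W_{\be}$ has codimension one and spans $H_{\be}$ --- so $H_{\be}$ is in the wall system --- yet $W_{\be}\subsetneq H_{\be}\cap\sstCone{\bd}$ and the computed fan shows the excess is covered by no other wall, so $H_{\be}$ meets the interiors of maximal cones (second claim, via \cref{c:walls}).

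Two points in your plan would also need tightening if you carried it out. First, for the first claim your stated check --- that $W_{\be_0}$ itself is empty or of codimension at least two --- is not sufficient on its own: distinct subdimension vectors can define one and the same hyperplane ($H_{\be}$ depends only on the plane spanned by $\be$ and $\bd$), so some other $\be$ with $H_{\be}=H_{\be_0}$ could carry a codimension-one wall and force $H_{\be_0}$ into the wall system after all; the test must be run against the complete list of codimension-one walls assembled in your second step, not against $W_{\be_0}$ alone. Second, your criterion for the second claim is sound --- if $\theta_0\in H_{\be_1}\cap\sstCone{\bd}$ and $\repspace[\stable{\theta_0}]{\bd}=\repspace[\semistable{\theta_0}]{\bd}\neq\emptyset$, then \cref{c:artin} puts $\theta_0$ on no wall and hence, by \cref{t:main} applied with $L=\stabSpace{\bd}$, in the interior of a maximal cone --- but the ``delicate case analysis'' you flag (that $\theta_0$ avoids $W_{\be}$ for \emph{every} $\be\leq\bd$ simultaneously) is precisely the content the paper settles by machine computation of the full fan in \software{QuiverTools} (or \software{ThinSincereQuivers}); without that computation, or an explicit finite hand-check for a named $(Q,\bd)$, the argument remains a plan rather than a proof.
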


We provide implementations
of our algorithms for computing the walls $W_\be$ and the GIT fan in \cite{walls-and-chambers-implementation}.
These use the \software{QuiverTools} package
for \software{SageMath} and \software{Julia}, see \cite{quivertools,quivertools-paper}.
In \cref{sec:examples} we use these implementations to compute various examples,
including an illustration of how the GIT fan changes under quiver mutations
and the complete GIT fan for a realization of the Segre cubic as a quiver moduli space.

\begin{remark}
In \cite[p.163]{AH:09} the \textit{walls} are defined to be the cones in the GIT fan of codimension 1.
By contrast, our sets $W_\be$ can have any codimension in $\sstCone{\bd}$, including codimension $0$;
we show in \cref{c:walls} that the $W_\be$ of codimension 1
are precisely the walls of \cite{AH:09}.
We call the set $W_\be$ a ``wall'', even when it has codimension $0$,
because it always arises from a codimension $1$ wall of the abelianized quiver.
\end{remark}

\subsection*{Relation to other work}
For quivers without oriented cycles,
a description of the GIT fan is given in~\cite{Chindris:08} (but see also~\cite{CG:19,IPT:15}).
Our description of the walls (\cref{lem:walls}) can be derived
from the description of GIT cones that appears in op. cit.,
but that derivation is not simpler than what we present here.

Our discussion of GIT equivalence relies heavily
on the general structure of the GIT fan worked out in~\cite{AH:09}.

Large portions of \cref{alphathm:geometric-phase} follow from~\cite[Propositions~4.4 and~5.3]{King:94},
so we suspect that this result may be known to experts.
We nevertheless include the statement and a proof
as we could not find it in the literature.

Finally, we note that the GIT fan of $(Q, \bd)$ when $Q$ is acyclic and $\bd = (1, 1, \ldots, 1)$ is also studied in \cite{Hille:03}, and that an algorithm for computing the fan in this case has been implemented in \software{Macaualy2} (see \cite{BP:21}).

\subsection*{Conventions}
Matching the assumptions in \cite{AH:09},
the representation $\repspace{\bd}$ and group $G_\bd$
are defined
over an algebraically closed field $k$ of characteristic zero.
If $V$ is a real vector space and $U \subseteq V$ is a subset
invariant under the scaling action of $\mathbb{R}$,
the \textit{dimension} of $U$ is the dimension of its linear span.

\subsection*{Acknowledgements}
The authors thank Pieter Belmans, Harm Derksen, and Markus Reineke for helpful conversations.
H.F.~was partially supported by the Deutsche For\-schungsgemeinschaft (DFG, German Research Foundation)
SFB-TRR~358/1~2023 ``Integral Structures in Geometry and Representation Theory'' (491392403).
G.P. was supported by the Luxembourg National Research Fund (FNR-17953441).

\section{Background}\label{s:setup}

Let $Q$ be a quiver with vertex set $Q_0$ and arrow set $Q_1$,
and let $s, t: Q_1 \to Q_0$ denote the source and target maps.
For a dimension vector $\bd \in \mathbb{Z}^{Q_0}_{\geq 0}$, define
$\repspace{\mathbf{d}}$ and $\group{\mathbf{d}}$ respectively as
\begin{equation}\notag
\repspace{\mathbf{d}} = \bigoplus_{a \in Q_1} \Hom(k^{\bd_{s(a)}},k^{\bd_{t(a)}}),
\quad \quad \quad \quad \quad \quad
\group{\mathbf{d}} = \prod_{i \in Q_0} \GL(k^{\bd_i}).
\end{equation}

We call the elements of $\repspace{\mathbf{d}}$ \emph{representations of $Q$ of dimension vector $\bd$}.
The algebraic group $\group{\mathbf{d}}$ acts linearly on the left on~$\repspace{\mathbf{d}}$,
with the action of $g = (g_i) \in \group{\mathbf{d}}$ on the representation~$M=(M_a)\in\repspace{\mathbf{d}}$
defined as
\begin{equation}
g\cdot M \colonequals (g_{t(a)}M_{a}g_{s(a)}^{-1}).
\end{equation}
The central closed subgroup $\Delta = \{z\cdot\id \mid z \in k^\times\}$ of $\group{\mathbf{d}}$ acts trivially,
so we have an induced action
of $\pgroup{\mathbf{d}}\colonequals \group{\mathbf{d}}/\Delta$ on $\repspace{\mathbf{d}}$.
Two points $M, N \in \repspace{\mathbf{d}}$ are \emph{isomorphic as representations}
if and only if they lie in the same $\group{\bd}$-orbit.

A real vector $\theta \in \R^{Q_0}$ defines a function on the set $\smash{\Z^{Q_0}_{\geq 0}}$ of dimension vectors
by setting $\theta(\bd)\colonequals\theta \cdot \bd$, where $\bd$ is an element of $\smash{\Z^{Q_0}_{\geq 0}}$ and $\theta \cdot \bd$ is the dot product.
If $M$ is a representation of $Q$ of dimension vector $\be$
(that is, $M\in\repspace{\be}$),
we define $\theta(M)\colonequals\theta(\be)$.

Given a dimension vector $\bd \in \smash{\mathbb{Z}^{Q_0}_{\geq 0}}$, the associated \textit{space of stability parameters} is
\begin{equation}
\stabSpace{\mathbf{d}} \colonequals \{\theta \in \R^{Q_0} \mid \theta( \mathbf{d}) = 0\}.
\end{equation}
A choice of stability parameter yields a notion of (semi\nobreakdash-)stability for elements of $\repspace{\bd}$ as follows.

\begin{definition}
    A representation $M$ is $\theta$-\emph{semistable} (respectively $\theta$-\emph{stable}) if $\theta(M)=0$ and $\theta(M') \leq 0$ (respectively $\theta(M') < 0$) holds for all proper nonzero subrepresentations $M'$ of $M$.
\end{definition}

The sets $\repspace[\stable{\theta}]{\mathbf{d}} \subseteq \repspace[\semistable{\theta}]{\mathbf{d}}$
of $\theta$-stable and $\theta$-semistable representations
are possibly empty Zariski open subsets of $\repspace{\bd}$.

\begin{remark}
A stability parameter $\theta$ with integral coordinates defines a character $\chi_\theta$
of the group $\pgroup{\mathbf{d}}$
by letting $\chi_\theta(g)\colonequals \prod_{i \in Q_0} {\det(g_i)}^{-\theta_i}$.
Such a character provides a linearisation of the trivial line bundle on $\repspace{\mathbf{d}}$,
which we denote by $L(\chi_\theta)$.
King showed \cite[Proposition~3.1]{King:94} that for quiver representations,
(semi\nobreakdash-)stability with respect to this line bundle is equivalent to
the notion of $\theta$-(semi\nobreakdash-)stability defined above.
\end{remark}

The problem studied in this article is to find computational methods
for understanding the dependence of $\repspace[\semistable{\theta}]{\mathbf{d}}$ on $\theta$.
A first question in this direction is to determine for which choices of $\theta$
the set~$\repspace[\semistable{\theta}]{\mathbf{d}}$ is nonempty.
In fact, the set of such $\theta$ forms a cone, which we call the \textit{semistable cone} of $\bd$.

\begin{definition}
The \textit{semistable cone} of a dimension vector $\bd \in \Z^{Q_0}_{\geq 0}$ is
\begin{equation}
\sstCone{\bd} \colonequals \{\theta \in \stabSpace{\bd} \mid \repspace[\semistable{\theta}]{\mathbf{d}} \neq \emptyset\}.
\end{equation}
\end{definition}

Thanks to work of Schofield and Derksen--Weyman,
the semistable cone $\sstCone{\bd}$ can be computed via a recursive algorithm.
Let us detail. Let $\mathbf{e}$ be a dimension vector with $e_i \leq d_i$ for every $i \in Q_0$.
We write $\mathbf{e} \leq \mathbf{d}$ and call $\mathbf{e}$ a \emph{subdimension vector} of $\mathbf{d}$.
We say, following Schofield~\cite[Section~1]{Schofield:92},
that a property holds for a \emph{general representation} of dimension vector $\mathbf{d}$
if there exists a non-empty Zariski open subset of $\repspace{\mathbf{d}}$
such that the property holds for all members of this open subset.

\begin{definition}
    A subdimension vector $\mathbf{e}$ of $\mathbf{d}$ is called a \emph{generic subdimension vector}
    if a general representation of dimension vector $\mathbf{d}$ admits
    a subrepresentation of dimension vector $\mathbf{e}$. In this case we write $\be \hookrightarrow \bd$.
\end{definition}

\begin{example}\label{ex:base}
The zero vector $\mathbf{0}$ is generic for every $\bd$. The subdimension vector $\bd$ is also generic for $\bd$.
\end{example}

The following characterization of generic subdimension vectors
is due to Schofield; see~\cite[Theorems~3.3~and~5.4]{Schofield:92}.
It uses the \textit{Euler form} $\langle  \cdot, \cdot \rangle: \Z^{Q_0} \times \Z^{Q_0} \to \Z$, given by
\begin{equation}
\langle \mathbf{x}, \mathbf{y} \rangle
\colonequals
\sum_{i \in Q_0} \mathbf{x}_i \mathbf{y}_i - \sum_{a \in Q_1} \mathbf{x}_{s(a)}\mathbf{y}_{t(a)}.
\end{equation}

\begin{theorem}\label{t:generic_equivalent}
For a subdimension vector $\mathbf{e}$ of $\mathbf{d}$, the following are equivalent:
\begin{enumerate}
\item [(i)] The vector $\mathbf{e}$ is a generic subdimension vector of $\mathbf{d}$.
\item [(ii)] Every representation in $\repspace{\bd}$ has a subrepresentation of dimension vector $\be$.
\item [(iii)] For all generic subdimension vectors $\mathbf{f}$ of $\mathbf{e}$ we have $\langle \mathbf{f},\mathbf{d}-\mathbf{e} \rangle \geq 0$.
\end{enumerate}
\end{theorem}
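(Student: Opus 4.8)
The plan is to route both nontrivial implications through a single geometric object and then extract the numerical criterion homologically. Consider the product of Grassmannians $\mathrm{Gr} = \prod_{i \in Q_0} \mathrm{Gr}(\be_i, k^{\bd_i})$ together with the incidence variety
\[
Z = \bigl\{(M, (U_i)) \in \repspace{\bd} \times \mathrm{Gr} : M_a(U_{s(a)}) \subseteq U_{t(a)} \text{ for all } a \in Q_1 \bigr\}.
\]
The projection $q \colon Z \to \mathrm{Gr}$ exhibits $Z$ as the total space of a vector bundle, since over a fixed flag $(U_i)$ the condition on each $M_a$ is linear; hence $Z$ is smooth and irreducible. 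A block-matrix count of the fibre of $q$, namely that $M_a(U_{s(a)}) \subseteq U_{t(a)}$ cuts down $\Hom(k^{\bd_{s(a)}}, k^{\bd_{t(a)}})$ by $\be_{s(a)}(\bd_{t(a)} - \be_{t(a)})$, yields
\[
\dim Z - \dim \repspace{\bd} = \sum_{i \in Q_0} \be_i(\bd_i - \be_i) - \sum_{a \in Q_1} \be_{s(a)}(\bd_{t(a)} - \be_{t(a)}) = \langle \be, \bd - \be \rangle .
\]
The other projection $p \colon Z \to \repspace{\bd}$ has image exactly the locus of representations admitting a subrepresentation of dimension vector $\be$.

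For the equivalence of (i) and (ii) I would observe that $p$ is proper: it is the restriction to the closed subvariety $Z$ of the projection $\repspace{\bd} \times \mathrm{Gr} \to \repspace{\bd}$, which is proper because $\mathrm{Gr}$ is complete. Hence the image of $p$ is closed. Since $\repspace{\bd}$ is irreducible, a closed subset either is all of $\repspace{\bd}$ or has empty interior, so the image contains a nonempty open set if and only if it equals $\repspace{\bd}$. This is precisely (i) $\Leftrightarrow$ (ii), with (ii) $\Rightarrow$ (i) being immediate.

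For (i) $\Leftrightarrow$ (iii) I would pass to homological algebra. At a point $(M, (U_i)) \in Z$, writing $N$ for the subrepresentation carried by $(U_i)$ and $P = M/N$, the fibre $p^{-1}(M)$ is the quiver Grassmannian of $\be$-dimensional subrepresentations of $M$; its tangent space at $N$ is $\Hom(N, P)$ and its obstructions lie in $\mathrm{Ext}^1(N, P)$. As the path algebra is hereditary, $\langle \be, \bd - \be \rangle = \dim \Hom(N, P) - \dim \mathrm{Ext}^1(N, P)$, matching the dimension count above. Using generic smoothness in characteristic zero, $\be \hookrightarrow \bd$ holds if and only if the generic fibre of $p$ attains the expected dimension $\langle \be, \bd - \be \rangle$, and this happens exactly when $\mathrm{Ext}^1(N, P)$ vanishes for a general $N$ of dimension $\be$ and a general $P$ of dimension $\bd - \be$; that is, when the generic extension number $\mathrm{ext}(\be, \bd - \be)$ is zero.

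It then remains to identify $\mathrm{ext}(\be, \bd - \be) = 0$ with condition (iii), for which I would invoke Schofield's formula $\mathrm{ext}(\be, \bd - \be) = \max_{\bf \hookrightarrow \be}\{-\langle \bf, \bd - \be \rangle\}$ from \cite{Schofield:92}. Because $\bf = \bone \cdot 0$, i.e.\ the zero vector, is always a generic subdimension vector of $\be$ and contributes the value $0$, this maximum is nonnegative and vanishes precisely when $\langle \bf, \bd - \be \rangle \geq 0$ for every generic subdimension vector $\bf$ of $\be$, which is (iii). Establishing this formula is the main obstacle: it is the genuinely recursive heart of the argument, proved by analysing the generic rank of a homomorphism between general representations of dimension vectors $\be$ and $\bd - \be$ and showing that the kernel has, generically, a dimension vector realising the maximum. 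A second delicate point is the passage between ``the generic fibre has the expected dimension'' and ``the generic pair $(N, P)$ has no extensions,'' since one must verify that the sub- and quotient representations of a general $M$ are themselves general; this genericity transfer, together with Schofield's formula, is where the real work lies, whereas the bundle structure and properness used for (i) $\Leftrightarrow$ (ii) are routine.
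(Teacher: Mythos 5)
The paper does not actually prove this theorem; it states it as a citation to Schofield (Theorems~3.3 and~5.4 of \cite{Schofield:92}), so the right comparison is with Schofield's original argument, and your sketch is in essence a faithful reconstruction of it. Your incidence variety $Z$, the vector-bundle structure of $q$ (the rank is constant since the codimension $\be_{s(a)}(\bd_{t(a)}-\be_{t(a)})$ of each linear condition is independent of the flag), the dimension count $\dim Z - \dim \repspace{\bd} = \langle \be, \bd-\be\rangle$, and the properness argument for (i)~$\Leftrightarrow$~(ii) are all correct and complete --- the last is exactly the closedness observation the paper itself borrows from Schofield in the proof of \cref{t:semistable_generic}. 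Your reduction of (i)~$\Leftrightarrow$~(iii) to generic vanishing of $\mathrm{Ext}^1$ is also the standard route and is sound, though two points deserve sharpening. First, phrase the fibre-dimension criterion over the image of $p$: the cleanest packaging is that the cokernel of $dp$ at $(M,(U_i))$ is $\mathrm{Ext}^1(N, M/N)$, so in characteristic zero $p$ is dominant if and only if this group vanishes at a general point of $Z$; note that when $\langle \be, \bd-\be\rangle < 0$ no nonempty fibre can attain the ``expected dimension,'' so the criterion as you state it needs this interpretation. Second, the genericity transfer you flag is genuinely needed but routine: restrict $q$ to the standard coordinate flag, whose fibre is the space of block-triangular representations; projecting onto the diagonal blocks is a surjective trivial vector bundle over $\repspace{\be} \times \repspace{\bd - \be}$, and since $\group{\bd}$ acts transitively on the product of Grassmannians, a general point of $Z$ does yield a general pair $(N, M/N)$.

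The one substantive reservation is that you invoke Schofield's formula $\operatorname{ext}(\be, \bd-\be) = \max_{\bf \hookrightarrow \be}\{-\langle \bf, \bd-\be\rangle\}$ as a black box: that formula is Theorem~5.4 of \cite{Schofield:92}, i.e.\ half of the very statement under review, and its proof (an induction via the kernel of a general homomorphism between general representations) is the recursive heart you correctly identify as ``the real work.'' This is not circular --- Schofield derives Theorem~5.4 from the $\operatorname{ext}$-vanishing criterion that you sketch independently, not from statement~(iii) --- but it does mean your proposal is a proof only modulo that citation. Since the paper makes exactly the same citation, both when stating this theorem and at~\eqref{eq:ext}, this matches the standard of rigor adopted there; just be aware that what you have written self-contains Theorem~3.3 of \cite{Schofield:92} (modulo the transfer argument above) while leaving Theorem~5.4 entirely external.
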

The last item gives a recursive algorithm for computing the generic subdimension vectors of $\bd$,
the base case being covered by \cref{ex:base}.
The semistable cone can be characterized in terms of generic subdimension vectors.
The following result appears in~\cite[Theorem~3]{DW:00} for integer-valued
stability parameters and quivers without oriented cycles, so we give a proof here.

\begin{theorem}\label{t:semistable_generic}
Let $\bd \in \Z^{Q_0}_{\geq 0}$ be a dimension vector. Then
\begin{equation}
\sstCone{\bd} = \{\theta \in \stabSpace{\bd} \mid \theta(\be) \leq 0\text{ for all }\be \hookrightarrow \bd \}.
\end{equation}
\end{theorem}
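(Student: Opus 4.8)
The plan is to prove the two inclusions separately; the containment $\sstCone{\bd} \subseteq \{\theta \in \stabSpace{\bd} : \theta(\be) \leq 0 \text{ for all } \be \hookrightarrow \bd\}$ is immediate from the definitions, while the reverse containment carries the real content.

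For the forward inclusion, suppose $\theta \in \sstCone{\bd}$, so there is a $\theta$-semistable $M \in \repspace{\bd}$, and recall $\theta(\bd) = 0$ since $\theta \in \stabSpace{\bd}$. Let $\be \hookrightarrow \bd$. The cases $\be = \mathbf{0}$ and $\be = \bd$ give $\theta(\be) = 0$. Otherwise, by the implication (i)$\Rightarrow$(ii) of \cref{t:generic_equivalent}, $M$ admits a subrepresentation $M'$ of dimension vector $\be$; since $\mathbf{0} \neq \be \neq \bd$, this $M'$ is a proper nonzero subrepresentation, so $\theta$-semistability of $M$ yields $\theta(\be) = \theta(M') \leq 0$.

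For the reverse inclusion, suppose $\theta \in \stabSpace{\bd}$ satisfies $\theta(\be) \leq 0$ for every $\be \hookrightarrow \bd$. First I would establish the following dictionary: for a general representation $M$ of dimension vector $\bd$, the dimension vectors of its subrepresentations are exactly the generic subdimension vectors of $\bd$. That every generic $\be$ occurs holds on a dense open set, since there are finitely many subdimension vectors and one may intersect the dense opens supplied by the definition. For the converse, fix a subdimension vector $\be'$ and consider the incidence variety of pairs $(M, U)$ with $U$ a point of the product of Grassmannians $\prod_{i} \mathrm{Gr}(\be'_i, \bd_i)$ satisfying $M_a(U_{s(a)}) \subseteq U_{t(a)}$ for all arrows $a$; this is closed, and since the Grassmannian bundle is proper, its image $Z_{\be'} \subseteq \repspace{\bd}$ (the locus of $M$ having a subrepresentation of dimension $\be'$) is closed. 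If $\be'$ is not generic, then $Z_{\be'}$ is not dense, so $Z_{\be'} \subsetneq \repspace{\bd}$; removing the finitely many such $Z_{\be'}$ leaves a dense open set on which $M$ has no subrepresentation of non-generic dimension. Intersecting with the previous open set proves the dictionary. Granting it, choose $M$ general: then $\theta(M) = \theta(\bd) = 0$, and every proper nonzero subrepresentation $M'$ has a generic dimension vector $\be'$, whence $\theta(M') = \theta(\be') \leq 0$ by hypothesis. Thus $M$ is $\theta$-semistable and $\theta \in \sstCone{\bd}$.

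The main obstacle is the reverse-inclusion dictionary, specifically the closedness of $Z_{\be'}$: the argument hinges on realizing the ``admits a subrepresentation of dimension $\be'$'' locus as the image of a proper projection from a Grassmannian incidence variety, so that non-genericity of $\be'$ upgrades from ``not dense'' to ``a proper closed subset'' that can be safely discarded.
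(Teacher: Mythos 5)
Your proposal is correct and follows essentially the same route as the paper: the forward inclusion via \cref{t:generic_equivalent} plus semistability, and the reverse inclusion by exhibiting a general representation avoiding the loci of representations with subrepresentations of non-generic dimension vectors. The only difference is that where the paper cites Schofield for the claim that each such locus is a proper closed subset, you prove it in-line via the proper projection from the Grassmannian incidence variety---which is precisely Schofield's argument.
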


\begin{proof}
Suppose $\repspace[\semistable{\theta}]{\mathbf{d}}$ is not empty.
Then there exists a $\theta$-semistable representation $M$ of dimension vector $\mathbf{d}$.
By \cref{t:generic_equivalent}, it has a subrepresentation of dimension vector $\mathbf{e}$
and by semistability, $\theta(\mathbf{e}) \leq 0$ must hold.

Conversely, suppose that $\theta(\mathbf{e}) \leq 0$ for all $\be \hookrightarrow \bd$.
For any subdimension vector $\mathbf{f}$ of $\bd$ that is not generic,
the set of representations $M \in \repspace{\mathbf{d}}$
which admit a subrepresentation of dimension vector $\mathbf{f}$
can be shown to be a proper closed subset of $\repspace{\bd}$ (see the paragraph before~\cite[Lemma~3.1]{Schofield:92}).
The complement of the finite union of all such subsets
is thus open and dense, and all of its points are $\theta$-semistable by construction.
\end{proof}

It is possible that the set of generic subdimension vectors of $\bd$
is the set $\{\mathbf{0}, \bd\}$; in this case, we have $\sstCone{\bd} = \stabSpace{\bd}$ (see \cref{ex:1}).

\section{Walls}
\label{s:walls}

A representation $M$ of dimension vector $\bd$ is \textit{strictly semistable}
for $\theta \in \stabSpace{\bd}$ if it is $\theta$-semistable but not $\theta$-stable.
Our first objective is to give an algorithm to determine
the set of~$\theta \in \stabSpace{\bd}$ for which $\repspace[\semistable{\theta}]{\mathbf{d}}$ contains a strictly semistable point.
With this in mind, we define sets $W_\be$ whose union will be shown
to be the set of $\theta \in \stabSpace{\bd}$ such that $\repspace[\semistable{\theta}]{\mathbf{d}}$
contains a strictly semistable point.
If $\be$ is a subdimension vector of $\bd$ such that $e_i < d_i$ for some $i \in Q_0$,
we say that $\be$ is a \textit{proper} subdimension vector of $\bd$.

\begin{definition}
Let $\mathbf{e}$ be a nonzero proper subdimension vector of $\mathbf{d}$. Define
\begin{equation}\notag
\wall{\mathbf{e}} \colonequals \{\theta \in \stabSpace{\bd} \mid \theta(\be)=0,\text{ and there exists } (N, M) \in \repspace{\mathbf{e}}\times \repspace[\semistable{\theta}]{\mathbf{d}}\text{ such that } N \subseteq M \}.
\end{equation}
\end{definition}

In other words, $W_\be$ is the set of $\theta \in \sstCone{\bd}$ with a semistable representation
destabilized by a subrepresentation of dimension $\be$. From this the following lemma is immediate.

\begin{lemma}\label{c:artin}
Let $\theta \in \stabSpace{\mathbf{d}}$.
Then $\repspace[\semistable{\theta}]{\mathbf{d}}$ contains a strictly semistable point
if and only if $\theta \in W_\be$ for some nonzero proper subdimension vector $\be$ of $\bd$.
\end{lemma}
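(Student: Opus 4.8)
The plan is to prove both directions directly from the definitions. The key observation is simply that the existence of a strictly semistable point is equivalent to the existence of a destabilizing subrepresentation, and that such a subrepresentation picks out exactly one of the sets $W_\be$.

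First I would handle the ``only if'' direction. Suppose $\repspace[\semistable{\theta}]{\mathbf{d}}$ contains a strictly semistable point $M$. By definition, $M$ is $\theta$-semistable but not $\theta$-stable, so there exists a proper nonzero subrepresentation $N \subseteq M$ with $\theta(N) = 0$ rather than $\theta(N) < 0$. Let $\be$ be the dimension vector of $N$. Since $N$ is nonzero, $\be \neq \mathbf{0}$; since $N$ is a proper subrepresentation, $\be \neq \bd$, and because $\be \leq \bd$ with $\be \neq \bd$ we have $e_i < d_i$ for some $i$, so $\be$ is a nonzero proper subdimension vector. The pair $(N, M) \in \repspace{\be} \times \repspace[\semistable{\theta}]{\mathbf{d}}$ together with $\theta(\be) = \theta(N) = 0$ witnesses exactly the conditions defining $W_\be$, so $\theta \in W_\be$.

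Conversely, suppose $\theta \in W_\be$ for some nonzero proper subdimension vector $\be$ of $\bd$. Then by definition there is a pair $(N, M)$ with $M \in \repspace[\semistable{\theta}]{\mathbf{d}}$, $N \subseteq M$ of dimension vector $\be$, and $\theta(\be) = 0$. Since $\be$ is nonzero and proper, $N$ is a proper nonzero subrepresentation of $M$ with $\theta(N) = 0$. This means the strict inequality $\theta(N) < 0$ required for $\theta$-stability fails, so $M$ is semistable but not stable, i.e. $M$ is a strictly semistable point of $\repspace[\semistable{\theta}]{\mathbf{d}}$.

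This lemma is essentially a restatement of the definitions, so I do not expect any genuine obstacle; the only point requiring a small amount of care is the bookkeeping that a nonzero proper subrepresentation of $M$ corresponds precisely to a nonzero proper subdimension vector $\be$, and in particular that ``proper subrepresentation'' ($N \subsetneq M$) is compatible with the article's notion of ``proper subdimension vector'' ($e_i < d_i$ for some $i$). A subrepresentation with dimension vector equal to $\bd$ must be all of $M$, so properness of the subrepresentation gives $\be \neq \bd$, which is what is needed.
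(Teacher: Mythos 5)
Your proof is correct and takes essentially the same approach as the paper, which treats \cref{c:artin} as immediate from the definition of $W_\be$ (rephrased as the set of $\theta \in \sstCone{\bd}$ admitting a semistable representation destabilized by a subrepresentation of dimension vector $\be$); your argument is exactly that definitional unwinding, spelled out, using semistability plus failure of stability to produce a proper nonzero subrepresentation $N$ with $\theta(N)=0$ and conversely. The one bookkeeping point you flag --- that a proper subrepresentation $N \subsetneq M$ forces $\be \neq \bd$ and hence $e_i < d_i$ at some vertex, matching the paper's notion of proper subdimension vector --- is the right detail to check and is handled correctly.
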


Our first result is the following,
which in combination with \cref{t:semistable_generic} allows to effectively compute
the locus in $\sstCone{\mathbf{d}}$ of stability parameters with strictly semistable points.

\begin{proposition}\label{lem:walls}
Let $\be$ be a nonzero proper subdimension vector of $\bd$. Then
\begin{equation}
\wall{\mathbf{e}} = \sstCone{\mathbf{e}} \cap \sstCone{\mathbf{d}-\mathbf{e}}.
\end{equation}
\end{proposition}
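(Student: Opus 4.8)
The plan is to prove the set equality by establishing the two inclusions, using throughout the short exact sequence attached to a subrepresentation together with the additivity of $\theta$ along such sequences. The key bookkeeping observation is that membership in either side forces $\theta(\be)=0$, and since every $\theta$ under consideration satisfies $\theta(\bd)=0$, we automatically obtain $\theta(\bd-\be)=\theta(\bd)-\theta(\be)=0$ as well. Thus all three dimension vectors $\be$, $\bd-\be$, $\bd$ are ``of slope zero'' for $\theta$, which is exactly what makes the semistability conditions interact cleanly; it also shows that both sides lie in $\stabSpace{\be}\cap\stabSpace{\bd-\be}=\stabSpace{\bd}\cap\{\theta(\be)=0\}$, so there is no discrepancy between the ambient spaces.

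For the inclusion $\wall{\be}\subseteq\sstCone{\be}\cap\sstCone{\bd-\be}$, I would start from $\theta\in\wall{\be}$, which provides a $\theta$-semistable $M\in\repspace[\semistable{\theta}]{\bd}$ together with a subrepresentation $N\subseteq M$ of dimension vector $\be$, where $\theta(\be)=0$. I would then verify directly from the definition that both $N$ and the quotient $M/N$ are $\theta$-semistable. For $N$: any subrepresentation $N'\subseteq N$ is also a subrepresentation of $M$, so $\theta(N')\leq 0$ by semistability of $M$, while $\theta(N)=\theta(\be)=0$. For $M/N$: a subrepresentation of $M/N$ lifts to a subrepresentation $L$ with $N\subseteq L\subseteq M$, and by additivity $\theta(L/N)=\theta(L)-\theta(N)=\theta(L)\leq 0$, while $\theta(M/N)=\theta(\bd)-\theta(\be)=0$. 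This exhibits $\theta$-semistable representations of dimension vectors $\be$ and $\bd-\be$, giving $\repspace[\semistable{\theta}]{\be}\neq\emptyset$ and $\repspace[\semistable{\theta}]{\bd-\be}\neq\emptyset$, as required.

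For the reverse inclusion I would take $\theta\in\sstCone{\be}\cap\sstCone{\bd-\be}$, choose $\theta$-semistable representations $N\in\repspace[\semistable{\theta}]{\be}$ and $P\in\repspace[\semistable{\theta}]{\bd-\be}$, and form their direct sum $M=N\oplus P\in\repspace{\bd}$, which contains $N$ as a subrepresentation of dimension $\be$. It then remains to check that $M$ is $\theta$-semistable, after which the pair $(N,M)$ witnesses $\theta\in\wall{\be}$.

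The one step deserving genuine care --- the ``main obstacle'', such as it is --- is verifying that this direct sum is $\theta$-semistable, since a subrepresentation of $N\oplus P$ need not split as a direct sum of subrepresentations. I would handle an arbitrary subrepresentation $L\subseteq M$ by means of the exact sequence $0\to L\cap N\to L\to L/(L\cap N)\to 0$, noting that $L\cap N$ is a subrepresentation of $N$ and that $L/(L\cap N)\cong\pi(L)$ is a subrepresentation of $P$ under the projection $\pi\colon M\to P$. Additivity of $\theta$ then yields $\theta(L)=\theta(L\cap N)+\theta(\pi(L))\leq 0$, using semistability of $N$ and of $P$ separately, while $\theta(M)=\theta(\be)+\theta(\bd-\be)=0$; hence $M$ is $\theta$-semistable. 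Alternatively one may simply invoke the standard fact that the $\theta$-semistable representations of a fixed slope form an abelian subcategory closed under extensions, so that in particular it is closed under direct sums, subobjects of equal slope, and quotients of equal slope --- which is precisely the structure used in both inclusions.
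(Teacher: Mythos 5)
Your proof is correct and follows essentially the same route as the paper: the paper isolates your two verifications into a separate statement (\cref{lem:exact}, on short exact sequences $0 \to M' \to M \to M'' \to 0$) and applies it in both directions, whereas you inline the same additivity-along-exact-sequences arguments, including the identical treatment of an arbitrary subrepresentation $L \subseteq N \oplus P$ via $0 \to L\cap N \to L \to \pi(L) \to 0$. Your side remarks on the ambient-space bookkeeping and the slope-zero subcategory are accurate but add nothing beyond the paper's argument.
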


The proof uses the following lemma.
\begin{lemma}\label{lem:exact}
Let $0 \to M' \to M \to M'' \to 0$ be a short exact sequence of representations of $Q$.
Then, $M'$ and $M''$ are $\theta$-semistable if and only if $M$ is $\theta$-semistable and $\theta(M')=0$.
\end{lemma}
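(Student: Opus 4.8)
The plan is to prove both implications directly from the definition of $\theta$-semistability, exploiting the single structural fact that $\theta$ is additive on short exact sequences: since $\theta(N) = \theta \cdot \dim N$ and dimension vectors add across $0 \to N' \to N \to N'' \to 0$, we have $\theta(N) = \theta(N') + \theta(N'')$. I will apply this additivity repeatedly, both to the given sequence and to the sequences induced by intersecting or pushing subrepresentations through it. Throughout, write the sequence as $0 \to M' \xrightarrow{\iota} M \xrightarrow{\pi} M'' \to 0$.

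For the forward direction, assume $M'$ and $M''$ are $\theta$-semistable. Semistability forces $\theta(M') = 0$ and $\theta(M'') = 0$, whence $\theta(M) = 0$ by additivity; this already delivers the condition $\theta(M')=0$ demanded on the right-hand side. To check that $M$ is semistable I would take an arbitrary subrepresentation $N \subseteq M$ and split it against the sequence: set $N' \colonequals N \cap \iota(M')$ and $N'' \colonequals \pi(N)$, producing a short exact sequence $0 \to N' \to N \to N'' \to 0$ in which $N'$ is (isomorphic to) a subrepresentation of $M'$ and $N''$ is a subrepresentation of $M''$. Semistability of $M'$ and $M''$ then gives $\theta(N') \leq 0$ and $\theta(N'') \leq 0$, and additivity yields $\theta(N) \leq 0$. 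Hence $M$ is $\theta$-semistable.

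For the converse, assume $M$ is $\theta$-semistable and $\theta(M') = 0$; additivity immediately gives $\theta(M'') = 0$. To see that $M'$ is semistable, observe that any subrepresentation of $M'$ is, via $\iota$, a subrepresentation of $M$, so semistability of $M$ bounds its $\theta$-value by $0$. To see that $M''$ is semistable, I would take a subrepresentation $N'' \subseteq M''$ and pull it back to $N \colonequals \pi^{-1}(N'') \subseteq M$, which sits in a short exact sequence $0 \to M' \to N \to N'' \to 0$; additivity together with $\theta(M') = 0$ gives $\theta(N'') = \theta(N) \leq 0$, the final inequality coming from semistability of $M$.

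The only point requiring care is the verification that these constructions---intersection with a subobject and image or preimage under $\iota$ and $\pi$---produce honest subrepresentations fitting into the claimed short exact sequences. This is routine in the abelian category of representations of $Q$, where kernels and images are computed vertexwise, so I do not expect a genuine obstacle: the argument is purely formal and uses nothing about $Q$ beyond the additivity of $\theta$ and the abelian structure.
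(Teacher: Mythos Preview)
Your proof is correct and matches the paper's argument essentially verbatim: both directions use additivity of $\theta$ together with the intersection/image and preimage constructions to pass subrepresentations through the short exact sequence. The only cosmetic difference is that the paper treats the two implications in the opposite order.
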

\begin{proof}
Assume $M$ is $\theta$-semistable and $\theta(M')=0$. Since every subrepresentation of $M'$ is a subrepresentation of $M$, we see that $M'$ is $\theta$-semistable. By additivity of $\theta$ we have $\theta(M'')=0$. Moreover, a subrepresentation $N''$ of $M''$ lifts to a subrepresentation $N$ of $M$ fitting into an exact sequence
\begin{equation}
0 \to M' \to N \to N'' \to 0.
\end{equation}
By semistability of $M$ we have $\theta(N) \leq 0$, and by additivity of $\theta$ we have $\theta(N'')\leq 0.$ So $M''$ is $\theta$-semistable.

Conversely assume $M'$ and $M''$ are $\theta$-semistable. Then $\theta(M) = \theta(M') + \theta(M'')=0$. Now let $N \subset M$ be a subrepresentation, let $N'$ be the preimage in $M'$, and let $N''$ be the image in $M''$. We have a short exact sequence
\begin{equation}
0 \to N' \to N \to N'' \to 0,
\end{equation}
and since $M'$ and $M''$ are semistable we have $\theta(N') \leq 0$ and $\theta(N'')\leq 0$. It follows that $\theta(N)\leq 0$ and $M$ is $\theta$-semistable.
\end{proof}

\begin{proof}[Proof of \cref{lem:walls}]
Suppose $\theta \in \sstCone{\be} \cap \sstCone{\bd - \be}$.
Then $ \theta(\be)=0$, and we have representations $M'$ and $M''$
of dimension vectors $\be$ and $\bd - \be$, respectively, that are $\theta$-semistable.
It follows from \cref{lem:exact} that $M' \oplus M''$ is $\theta$-semistable of dimension vector $\bd$ and has a subrepresentation $M'$ of dimension vector $\be$.

Conversely, suppose that $\theta$ is in $W_\be$, i.e.,
there is a $\theta$-semistable representation $M$
of dimension vector $\bd$ having a subrepresentation $M'$
of dimension vector $\be$, and $\theta(\be)=0$.
It follows from \cref{lem:exact} that $M'$ is $\theta$-semistable
of dimension vector $\be$ and $M/M'$ is $\theta$-semistable
of dimension vector $\bd - \be$.
So $\theta \in \sstCone{\be} \cap \sstCone{\bd - \be}$ as claimed.
\end{proof}

We note that $W_\be$ is always contained in the subspace $H_\be$ of $\stabSpace{\bd}$, where
\begin{equation}
H_\be \colonequals \{\theta \in \stabSpace{\bd} \mid \theta(\be) = 0\} = \stabSpace{\bd} \cap \stabSpace{\be}.
\end{equation}
\cref{ex:1,ex:cycle} are cases where $W_\be$
is a proper subset of $H_\be\cap \sstCone{\bd}$.
In general, $W_{\be}$ can only be a proper subset
of $H_\be \cap \sstCone{\bd}$ if $\be$ is not
a generic subdimension vector,
as shown below.

\begin{lemma}\label{L:3.3}
Let $\be$ be a generic proper nonzero subdimension vector of $\bd$. Then
\begin{equation}
W_\be = H_\be \cap \sstCone{\bd}.
\end{equation}
\end{lemma}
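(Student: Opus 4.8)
The plan is to prove the two inclusions separately, and the reverse inclusion $H_\be \cap \sstCone{\bd} \subseteq W_\be$ is where the genericity hypothesis does all the work. The forward inclusion $W_\be \subseteq H_\be \cap \sstCone{\bd}$ holds for every nonzero proper subdimension vector $\be$, generic or not, and needs no new argument: it is exactly the containment already recorded in the discussion preceding the lemma. Indeed, if $\theta \in W_\be$ then $\theta \in \stabSpace{\bd}$ and $\theta(\be)=0$ by definition, so $\theta \in H_\be$; and the witnessing pair $(N,M)$ exhibits a $\theta$-semistable representation $M$ of dimension vector $\bd$, so $\repspace[\semistable{\theta}]{\mathbf{d}} \neq \emptyset$ and $\theta \in \sstCone{\bd}$.

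For the reverse inclusion I would take $\theta \in H_\be \cap \sstCone{\bd}$ and produce directly the data required by the definition of $W_\be$, rather than routing through \cref{lem:walls}. Since $\theta \in \sstCone{\bd}$, there is at least one $\theta$-semistable representation $M$ of dimension vector $\bd$. The crucial step is that, because $\be$ is a \emph{generic} subdimension vector, \cref{t:generic_equivalent}(ii) guarantees that \emph{every} representation of dimension vector $\bd$ admits a subrepresentation of dimension vector $\be$; in particular this specific $M$ has a subrepresentation $N$ with $\dim N = \be$. Finally $\theta \in H_\be$ gives $\theta(\be)=0$, so the pair $(N,M) \in \repspace{\be} \times \repspace[\semistable{\theta}]{\mathbf{d}}$ with $N \subseteq M$ witnesses $\theta \in W_\be$.

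The single load-bearing step is the appeal to \cref{t:generic_equivalent}(ii), and this is the only place genericity enters: for a merely general (non-generic) $\be$ one would know only that a \emph{general} representation of $\bd$ has a $\be$-dimensional subrepresentation, and there is no reason the particular semistable $M$ supplied by $\theta \in \sstCone{\bd}$ should be general. The equivalence of conditions (i) and (ii) upgrades ``general'' to ``every,'' which is precisely what lets us attach a subrepresentation of dimension $\be$ to an arbitrary $\theta$-semistable $M$. I do not expect any genuine obstacle beyond invoking this equivalence. One could alternatively establish the reverse inclusion via \cref{lem:walls}, by checking $\theta \in \sstCone{\be}$ and $\theta \in \sstCone{\bd-\be}$ using transitivity properties of generic subdimension vectors, but that route is more involved and the direct argument above is cleaner.
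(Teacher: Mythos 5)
Your proposal is correct and matches the paper's own proof essentially step for step: both deduce the reverse inclusion by taking the $\theta$-semistable representation $M$ guaranteed by $\theta \in \sstCone{\bd}$ and using the equivalence of (i) and (ii) in \cref{t:generic_equivalent} to attach to it a subrepresentation of dimension vector $\be$, with $\theta(\be)=0$ coming from $\theta \in H_\be$. Your explicit citation of \cref{t:generic_equivalent}(ii) and your remark on why genericity (rather than mere generality) is the load-bearing hypothesis simply make explicit what the paper leaves implicit in the phrase ``every representation in $\repspace[\semistable{\theta}]{\mathbf{d}}$ has a subrepresentation of dimension vector $\be$.''
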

\begin{proof}
The inclusion $W_\be \subseteq H_\be$ is immediate.
For the reverse inclusion, consider a stability parameter~$\theta\in{H_\be}\cap\sstCone{\bd}$.
Then $\theta(\be) = 0$ and $\theta \in \stabSpace{\bd}$,
and $\repspace[\semistable{\theta}]{\mathbf{d}}$ is not empty.
Since $\be$ is generic, every representation in $\repspace[\semistable{\theta}]{\mathbf{d}}$
has a subrepresentation of dimension vector $\be$.
In particular, we have a pair $(N, M) \in \repspace{\mathbf{e}} \times \repspace[\semistable{\theta}]{\mathbf{d}}$,
showing that $\theta \in W_\be$.
\end{proof}

\section{GIT equivalence}

Two stability parameters $\theta, \eta$ are \emph{GIT equivalent} if $\repspace[\semistable{\theta}]{\mathbf{d}}=\repspace[\semistable{\eta}]{\mathbf{d}}$.
In this section we give an algorithm to determine
when $\theta$ and $\eta$ are GIT equivalent.
Our method uses the walls $W_\be$ from \cref{s:walls} and general properties of twisted affine GIT.
\begin{definition}
Let $\mathbf{d}$ be a dimension vector,
let $L \subset \stabSpace{\bd}$ be
an affine subspace
and let $\theta \in L$.
We say $\theta$ is \emph{L-generic} if there exists $\epsilon > 0$
such that $\eta$ and $\theta$ are GIT equivalent for all $\eta \in L$ which satisfy $\left\| \theta - \eta \right\| < \epsilon$.
Otherwise, we say $\theta$ is \emph{L-special}.
\end{definition}

\begin{proposition}\label{t:main}
Let $L \subset \stabSpace{\bd}$ be
an affine subspace
and let $\theta \in L$.
Then $\theta$ is~$L\text{\hyphen}$special if and only if there exists
a nonzero proper subdimension vector $\be$ of $\bd$
such that $\theta \in W_\be$ and $L$ meets $H_\be$ transversely.
\end{proposition}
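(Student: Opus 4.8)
The plan is to prove the two implications of \cref{t:main} by translating the condition of being $L$-special into the language of the walls $W_\be$, using the fact (from \cref{c:artin} and \cref{lem:walls}) that the union of all $W_\be$ is exactly the locus of parameters admitting strictly semistable points.

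The plan is to derive both implications from a single semicontinuity observation: perturbing the stability parameter can only shrink the semistable locus. First I would record this precisely. Since $\bd$ has only finitely many subdimension vectors, I can choose $\epsilon > 0$ so that $\eta(\mathbf{f}) > 0$ whenever $\theta(\mathbf{f}) > 0$ and $\left\| \eta - \theta \right\| < \epsilon$, uniformly over all subdimension vectors $\mathbf{f}$ of $\bd$. For such $\eta$, if $M$ fails $\theta$-semistability, witnessed by a subrepresentation $N$ with $\theta(N) > 0$, then $\eta(N) > 0$ as well, so $M$ fails $\eta$-semistability. Hence $\repspace[\semistable{\eta}]{\mathbf{d}} \subseteq \repspace[\semistable{\theta}]{\mathbf{d}}$ for all $\eta$ within $\epsilon$ of $\theta$.

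For the implication from the wall condition to $L$-speciality, suppose $\theta \in W_\be$ and $L$ meets $H_\be$ transversely. By \cref{lem:walls} there are $\theta$-semistable representations $M'$ and $M''$ of dimension vectors $\be$ and $\bd - \be$, and \cref{lem:exact} shows that $M \colonequals M' \oplus M''$ is $\theta$-semistable with $M'$ a subrepresentation of dimension $\be$. Transversality gives $L \not\subseteq H_\be$ (otherwise $L \subseteq H_\be = \stabSpace{\bd}$ and the intersection would not be proper), so the linear function $\eta \mapsto \eta(\be)$ vanishes at $\theta$ but not identically on $L$, and is therefore positive at points of $L$ arbitrarily close to $\theta$. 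For any such $\eta$, the subrepresentation $M'$ satisfies $\eta(M') = \eta(\be) > 0$, so $M$ is $\theta$-semistable but not $\eta$-semistable; hence $\theta$ and $\eta$ are not GIT equivalent, and $\theta$ is $L$-special.

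For the converse, suppose $\theta$ is $L$-special and choose $\eta_n \to \theta$ in $L$ with each $\eta_n$ not GIT equivalent to $\theta$. The semicontinuity step, with its uniform $\epsilon$, gives $\repspace[\semistable{\eta_n}]{\mathbf{d}} \subseteq \repspace[\semistable{\theta}]{\mathbf{d}}$ for all large $n$, and since the loci differ the inclusion is strict. So there is $M_n$ that is $\theta$-semistable but not $\eta_n$-semistable, witnessed by a subrepresentation $N_n \subseteq M_n$ of some dimension $\be_n$ with $\eta_n(\be_n) > 0$; as $M_n$ is $\theta$-semistable, also $\theta(\be_n) \leq 0$. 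By finiteness of subdimension vectors one value $\be$ recurs, and along that subsequence $\theta(\be) \leq 0$ while $\theta(\be) = \lim \eta_n(\be) \geq 0$, forcing $\theta(\be) = 0$. The pair $(N_n, M_n) \in \repspace{\mathbf{e}} \times \repspace[\semistable{\theta}]{\mathbf{d}}$ then witnesses $\theta \in W_\be$ directly from the definition. Finally $\be$ is a nonzero proper subdimension vector, since $\eta_n(\bd) = 0$ together with $\eta_n(\be) > 0$ excludes both $\be = \mathbf{0}$ and $\be = \bd$; the same facts show $\be$ is not proportional to $\bd$, so $H_\be$ is a hyperplane, and since $\eta_n \in L$ with $\eta_n(\be) \neq 0$ while $\theta \in L \cap H_\be$, the subspace $L$ is not contained in $H_\be$ and hence meets it transversely.

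I expect the main obstacle to be the semicontinuity step, and specifically the recognition that it rules out the a priori possible scenario in which the perturbed parameters $\eta_n$ create \emph{new} semistable representations absent at $\theta$. Once this one-sidedness is established, so that every nearby failure of GIT equivalence is caused by a $\theta$-semistable representation being destabilized, the pigeonhole argument together with \cref{lem:exact} pins the destabilizing dimension vector down to a single $\be$ with $\theta \in W_\be$ and $L$ transverse to $H_\be$.
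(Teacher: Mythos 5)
Your proof is correct and follows essentially the same route as the paper: both directions rest on the one-sided semicontinuity statement (the paper's \cref{l:sst_locus_gets_smaller_in_neighborhood}), which you reprove inline via a uniform $\epsilon$ over the finitely many subdimension vectors, and your wall-implies-special direction builds $M' \oplus M''$ exactly as the paper does from \cref{lem:walls} and \cref{lem:exact}. The only differences are mechanical --- you use a sequence $\eta_n \to \theta$ with pigeonhole where the paper fixes one $\eta$ in an $\epsilon$-ball avoiding the hyperplanes $H_\be$ not containing $\theta$ --- and your insistence on choosing the destabilizing $\eta$ in $L \setminus H_\be$ (justified by transversality) is in fact slightly more careful than the paper's text, which takes $\eta \in \stabSpace{\bd} \setminus H_\be$ even though $L$-speciality requires $\eta \in L$.
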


The proof uses the following lemma, which is well-known in the setting of twisted affine GIT.
To be self-contained we give a proof in the language of quiver representations.

\begin{lemma}\label{l:sst_locus_gets_smaller_in_neighborhood}
Let $\theta \in \stabSpace{\mathbf{d}}$. Then there exists $\epsilon > 0$
such that for all $\eta \in \stabSpace{\mathbf{d}}$ with $\left\| \theta-\eta \right\| < \epsilon$,
we have $\repspace[\semistable{\theta}]{\mathbf{d}} \supseteq \repspace[\semistable{\eta}]{\mathbf{d}}$.
\end{lemma}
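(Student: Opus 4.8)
The plan is to reduce $\theta$-semistability to finitely many linear inequalities indexed by subdimension vectors, and then exploit that finiteness to produce a single $\epsilon$ that works uniformly for \emph{all} representations at once. First I would record the finiteness input: for a fixed dimension vector $\bd$ there are only finitely many subdimension vectors $\be$ with $\mathbf{0} \neq \be \neq \bd$, and for each the assignment $\eta \mapsto \eta(\be)$ is linear, hence continuous, on $\stabSpace{\bd}$. I would also recall the characterisation of semistability in these terms: for $\theta \in \stabSpace{\bd}$, a representation $M$ of dimension vector $\bd$ is $\theta$-semistable precisely when $\theta(\be) \leq 0$ for every $\be$ arising as the dimension vector of a proper nonzero subrepresentation of $M$, the equality $\theta(M)=0$ being automatic since $\theta \in \stabSpace{\bd}$.

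Next I would choose $\epsilon$. Let $\mathcal{E}$ be the finite set of subdimension vectors $\be$ with $\mathbf{0} \neq \be \neq \bd$ and $\theta(\be) > 0$. If $\mathcal{E}$ is empty, then $\theta(\be) \leq 0$ for every proper nonzero subdimension vector, so every $M$ is $\theta$-semistable and any $\epsilon$ works. Otherwise set $\delta \colonequals \min_{\be \in \mathcal{E}} \theta(\be) > 0$; using $\lvert \eta(\be) - \theta(\be)\rvert \leq \left\| \eta - \theta \right\| \cdot \left\| \be \right\|$ by Cauchy--Schwarz, I would take $\epsilon$ strictly smaller than $\delta$ divided by $\max_{\be \in \mathcal{E}} \left\| \be \right\|$. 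This guarantees $\eta(\be) > 0$ for every $\be \in \mathcal{E}$ whenever $\left\| \theta - \eta \right\| < \epsilon$.

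Finally I would verify the inclusion. Suppose $\left\| \theta - \eta \right\| < \epsilon$ and $M \in \repspace[\semistable{\eta}]{\mathbf{d}}$; I claim $M \in \repspace[\semistable{\theta}]{\mathbf{d}}$. If not, some proper nonzero subrepresentation $N \subseteq M$ has $\theta(N) > 0$, so its dimension vector $\be$ lies in $\mathcal{E}$ and hence satisfies $\eta(\be) > 0$ by the choice of $\epsilon$. But then $N$ is a proper nonzero subrepresentation of the $\eta$-semistable representation $M$ with $\eta(N) > 0$, contradicting $\eta$-semistability. Therefore every $\eta$-semistable $M$ is $\theta$-semistable, which is exactly $\repspace[\semistable{\theta}]{\mathbf{d}} \supseteq \repspace[\semistable{\eta}]{\mathbf{d}}$.

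The step I would flag as the crux is the \emph{uniformity} of $\epsilon$: the statement quantifies over all representations $M$ simultaneously, so a naive pointwise continuity argument would only yield an $\epsilon$ depending on $M$. The finiteness of the set of subdimension vectors is precisely what upgrades this to a single $\epsilon$, turning an a priori infinite family of conditions into a finite combinatorial one; beyond the elementary Cauchy--Schwarz bound no real computation is required. This keeps the argument self-contained in the language of quiver representations, as desired.
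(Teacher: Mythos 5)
Your proof is correct and follows essentially the same strategy as the paper's: both exploit the finiteness of the set of subdimension vectors $\be \leq \bd$ to choose one uniform $\epsilon$ below which no destabilizing inequality $\theta(\be) > 0$ can flip sign, then conclude that any $\eta$-semistable representation is $\theta$-semistable. Your min/Cauchy--Schwarz bookkeeping is just a quantitative repackaging of the paper's device of choosing $\epsilon$ so the ball avoids the hyperplanes $H_\be$ not containing $\theta$ (your one-sided bound even sidesteps the paper's need to exclude $\be \in \Q\cdot\bd$ and its reduction to $\theta \in \sstCone{\bd}$), but the underlying mechanism is identical.
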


\begin{proof}
    The complement of the semistable cone is open,
    so we may assume without loss of generality that $\theta \in \sstCone{\mathbf{d}}$.
    Define the set
    $I \colonequals \{\mathbf{e} \in \smash{\mathbb{N}^{Q_0}\setminus\{\mathbf{0}\}} \mid \mathbf{e} \leq \mathbf{d},~\mathbf{e} \notin \Q\cdot\mathbf{d}\}$
    and let $J \subseteq I$ be the subset of all $\mathbf{e}$ such that $\theta$ lies on the hyperplane $H_\mathbf{e}$. Then $\theta$ has a positive distance to all hyperplanes $H_\mathbf{f}$ with $\mathbf{f} \in I\setminus J$. Therefore we find $\epsilon > 0$ such that $\bigcup_{\mathbf{f} \in I\setminus J}H_\mathbf{f}$ does not intersect the $\epsilon$-ball around $\theta$. Let $\eta$ be contained in this $\epsilon$-ball and let $M$ be an $\eta$-semistable representation of dimension vector $\mathbf{d}$. Let $U$ be a subrepresentation of $M$ and let $\mathbf{e}$ be its dimension vector. Then $\eta(\mathbf{e}) \leq 0$.

    If $\eta(\mathbf{e}) = 0$ then $\mathbf{e}$ is either a fraction of $\bd$, or it belongs to $J$,
    because $\eta$ is not contained in $\bigcup_{\mathbf{f} \in I\setminus J}H_\mathbf{f}$.
    In both cases, we have $\theta(\mathbf{e}) = 0$.
    If $\eta(\mathbf{e}) < 0$, then we claim $\theta(\mathbf{e}) \leq 0$:
    indeed, if $\theta(\be) > 0$ then $\mathbf{e} \not \in J$ but $\theta$ and $\eta$
    are on opposite sides of the hyperplane $H_\mathbf{e}$; this implies that $H_\mathbf{e}$ cuts through
    the $\epsilon$-ball around $\theta$, which is a contradiction.
    In either case, we have shown that $\theta(U) \leq 0$ which implies that $M$ is $\theta$-semistable.
\end{proof}

\begin{proof}[Proof of \cref{t:main}]
Suppose that $\theta$ is $L$-special. If $\eta \in L$ is a stability parameter not GIT-equivalent to $\theta$, then there is a subdimension vector $\be$ such that $\theta(\be)\leq 0$ and $\eta(\be) > 0$, or vice versa. Note that for such $\be$ the hyperplane $H_\be$ meets $L$ transversely. It follows that $\theta$ must be contained in $H_\be$ for some $H_\be$ meeting $L$ transversely, as if it was not, the complement of such $H_\be$ would contain a neighborhood of $\theta$ consisting of stability parameters GIT-equivalent to $\theta$,
and $\theta$ would not be $L$-special.

Let $J$ be the set of subdimension vectors $\be$ such that $\theta \in H_\be$. Choose $\epsilon$ so that if $\be \not \in J$ then $H_\be$ does not meet the $\epsilon$-ball around $\theta$, and let $\eta \in \stabSpace{\bd}$ such that $||\theta - \eta|| < \epsilon$ but $\eta$ and $\theta$ are not GIT equivalent. It follows from \cref{l:sst_locus_gets_smaller_in_neighborhood} that there is a representation $M$ of dimension vector $\bd$ that is $\theta$-semistable but not $\eta$-semistable. Thus there is a subrepresentation $M'$ of $M$ of dimension vector $\be$ such that $\theta(M') \leq 0$ but $\eta(M')>0$. If $\theta(M') < 0,$ then $\theta$ is not in $H_\be$, so this hyperplane does not meet the $\epsilon$-ball containing $\theta$ and $\eta$, contradicting the fact that $\eta(M')>0$. So we must have $\theta(M')=0$. It follows from \cref{lem:walls} that $\theta$ is in $W_\be$.

Conversely, let $\be$ be a subdimension vector with $\theta \in W_\be$ and $H_\be$ meeting $L$ transversely. We show that $\theta$ is $L$-special. Let $\epsilon > 0$. Choose $\eta $ in $\stabSpace{\bd} \setminus H_\be$ such that $||\theta - \eta || < \epsilon$; this is possible as $H_\be$ has positive codimension inside $\stabSpace{\bd}$. Then $\eta(\be) \neq 0$. As $\theta \in \wall{\mathbf{e}}$, there exist $M' \in \repspace[\semistable{\theta}]{\mathbf{e}}$ and $M'' \in \repspace[\semistable{\theta}]{\mathbf{d}-\mathbf{e}}$. Then $M = M' \oplus M''$ is $\theta$-semistable. However it is not $\eta$-semistable: since $\eta(\bd)=0$ and $\eta(\be) \neq 0$, either $\eta(\be)>0$ or $\eta(\bd - \be)>0$. In the first case $M'$ is a subrepresentation with $\eta(M') > 0$, and in the second $M''$ is a subrepresentation with $\eta(M'')>0$. This shows that $\theta$ is special.
\end{proof}

By~\cite[Theorem~3.2]{AH:09}
the GIT equivalence classes on $\sstCone{\bd}$ define a fan, called \textit{the GIT fan of} $(Q, \bd)$.
 In particular, each GIT equivalence class is convex. Given $\theta, \eta \in \stabSpace{\bd}$, denote their convex hull by
\begin{equation}
[\theta, \eta] \colonequals \{t\theta + (1-t)\eta \mid t \in [0, 1]\}.
\end{equation}
\begin{corollary}\label{C:GIT}
Two stability parameters $\theta, \eta \in \sstCone{\bd}$ are GIT equivalent
if and only if for every nonzero proper $\be \leq \bd$,
either $[\theta, \eta]$ is contained in $W_{\be}$ or $[\theta, \eta]$ does not meet $W_{\be}$.
\end{corollary}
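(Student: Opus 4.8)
The plan is to prove both implications by comparing the convex hull $[\theta,\eta]$ with the walls along a single line. We may assume $\theta\neq\eta$, the case $\theta=\eta$ being immediate, and let $L$ denote the affine line spanned by $\theta$ and $\eta$; since each $H_\be$ is a hyperplane in $\stabSpace{\bd}$, the line $L$ meets $H_\be$ either transversely in a single point or not at all, and by \cref{lem:walls} we always have $W_\be\subseteq H_\be$.

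For the forward implication, suppose $\theta$ and $\eta$ are GIT equivalent. Since the GIT equivalence classes are convex by \cite[Theorem~3.2]{AH:09}, the whole segment $[\theta,\eta]$ lies in a single class $C$, so it suffices to show that for each nonzero proper $\be\le\bd$ the intersection $C\cap W_\be$ is either empty or all of $C$. I would establish this by producing a single representation that detects membership in $W_\be$ uniformly across $C$: given $\xi_0\in C\cap W_\be$, the identity $W_\be=\sstCone{\be}\cap\sstCone{\bd-\be}$ yields $\xi_0$-semistable representations $M'$ and $M''$ of dimension vectors $\be$ and $\bd-\be$, and by \cref{lem:exact} their direct sum $M'\oplus M''$ is $\xi_0$-semistable of dimension vector $\bd$. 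For any other $\xi\in C$ the semistable locus is the same, so $M'\oplus M''$ is $\xi$-semistable; testing the subrepresentations $M'$ and $M''$ forces $\xi(\be)\le 0$ and $\xi(\bd-\be)\le 0$, hence $\xi(\be)=0$, and then \cref{lem:exact} applied to $0\to M'\to M'\oplus M''\to M''\to 0$ shows that $M'$ and $M''$ are $\xi$-semistable, i.e.\ $\xi\in W_\be$. Thus $C\subseteq W_\be$, and correspondingly $[\theta,\eta]$ is either contained in $W_\be$ or disjoint from it.

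For the converse I argue by contraposition. Suppose $\theta$ and $\eta$ are not GIT equivalent. If every point of $[\theta,\eta]$ were $L$-generic, then each would have a neighborhood in $L$ of GIT-equivalent parameters, so the semistable locus would be locally constant along the connected segment and hence constant, forcing $\theta$ and $\eta$ to be GIT equivalent. Therefore some $\xi^*\in[\theta,\eta]$ is $L$-special, and \cref{t:main} provides a nonzero proper $\be$ with $\xi^*\in W_\be$ and $L$ meeting $H_\be$ transversely. Since $W_\be\subseteq H_\be$ and transversality gives $L\cap H_\be=\{\xi^*\}$, the segment meets $W_\be$ exactly at $\xi^*$; as $\theta\neq\eta$ it is not contained in $W_\be$, so the dichotomy fails for this $\be$, as required.

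The main obstacle is the forward implication's claim that a wall cannot properly cut a GIT equivalence class. The naive worry is that $\xi(\be)$ might pass from $0$ to a negative value as $\xi$ ranges over $C$, placing part of $C$ in $W_\be$ and part outside and thereby breaking the dichotomy. The key observation that removes this worry is that the witness $M'\oplus M''$ is $\xi$-semistable \emph{precisely} when $\xi(\be)=0$ (for $\xi(\be)\neq 0$ one of the summands destabilises it), so its membership in the constant semistable locus of the class pins $\xi(\be)$ to $0$ throughout $C$ and simultaneously certifies $\xi\in W_\be$. I expect the remaining steps—the transversality bookkeeping and the local-constancy argument for the converse—to be routine given \cref{t:main,lem:exact}.
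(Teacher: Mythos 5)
Your proof is correct, but it reaches the forward implication by a genuinely different route than the paper. The paper intersects the GIT fan of \cite[Theorem~3.2]{AH:09} with $[\theta,\eta]$, identifies the isolated points of the resulting decomposition with the $L$-special parameters, and reads both implications off \cref{t:main} as the presence or absence of transverse intersections of walls with the segment. You use the fan structure only through convexity of the GIT classes and instead prove the sharper statement that every wall $W_\be$ is a \emph{union} of GIT equivalence classes: transporting the witness $M'\oplus M''$ across a class via constancy of the semistable locus pins $\xi(\be)$ to $0$, and then \cref{lem:exact} together with \cref{lem:walls} certifies $\xi\in W_\be$. This buys you something concrete: it settles, with no transversality hypothesis, the configuration where $[\theta,\eta]$ lies inside $H_\be$ while $W_\be\subsetneq H_\be\cap\sstCone{\bd}$ (such proper containments do occur, see \cref{ex:1,ex:cycle}); in that configuration the segment meets $H_\be$ non-transversely, so the paper's closing assertion that ``no transverse intersections'' is equivalent to the dichotomy really rests on an argument of the kind you give, which the paper leaves implicit. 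Your converse is structurally the paper's---produce an $L$-special point on the segment and apply \cref{t:main}---with the fan decomposition replaced by an elementary local-constancy-and-connectedness argument, which is fine. Two harmless slips: $H_\be$ need not be a hyperplane when $\be\in\Q\cdot\bd$ (though this does not affect your use, since whenever transversality holds we have $L\not\subseteq H_\be$, and a line not contained in $H_\be$ meets it in at most one point regardless of the codimension of $H_\be$); and your remark that $M'\oplus M''$ is $\xi$-semistable ``precisely'' when $\xi(\be)=0$ is justified by your parenthetical only in the direction your proof actually uses.
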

\begin{proof}
The intersection of the GIT fan with the line segment $[\theta, \eta]$ is a decomposition of the segment into a collection of points and relatively open intervals that are precisely the GIT equivalence classes for points on $[\theta, \eta]$.
The points in this decomposition are precisely the $L$-special stability parameters in $[\theta, \eta]$, where $L$ is the line containing $[\theta, \eta]$, and hence by \cref{t:main} are precisely the transverse intersections of $W_{\be}$ with $[\theta, \eta]$ for subdimension vectors $\be$ of $\bd$.

We see that $\theta$ and $\eta$ are GIT equivalent if and only if the decomposition of $[\theta, \eta]$ given
above
contains no points, and this holds if and only if there are no transverse intersections of $W_{\be}$ with $[\theta, \eta]$. But this is equivalent to $[\theta, \eta]$ either being contained in $W_\be$ or not meeting $W_\be$.
\end{proof}

Following \cite[p.163]{AH:09} we say that a polyhedral fan $\Sigma$ is \textit{determined by its walls} if, letting $\omega$ be the support of $\Sigma$ and $\eta_1, \ldots, \eta_s$ the facets of the full-dimensional cones of $\Sigma$, we have that the maximal cones of $\Sigma$ are the closures of the connected components of $\omega \setminus (\eta_1 \cup \ldots \cup \eta_s)$. In this case, the maximal cones all have dimension $\dim(\omega)$, and the cones $\eta_1, \ldots, \eta_s$ determine $\Sigma$. It follows that $\Sigma$ is determined by its walls if and only if every cone of $\Sigma$ is a face of some full-dimensional cone of $\Sigma$. By \cite[Proposition~5.2]{AH:09}, the GIT fan of $(Q, \bd)$ is determined by its walls.

\begin{corollary}\label{c:walls}
Let $\Sigma$ be the GIT fan of $(Q, \bd)$. The union of the facets of the maximal cones of $\Sigma$ is equal to the union of $W_\be$ for $\be \leq \bd$ satisfying $\dim W_\be = \dim \sstCone{\bd}-1$.
\end{corollary}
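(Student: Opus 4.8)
The plan is to show that the two sides of the claimed identity, namely the union $\bigcup_i \eta_i$ of facets and the union of those $W_\be$ with $\dim W_\be = d-1$ (where $d := \dim \sstCone{\bd}$), both coincide with the set of parameters whose GIT class is not full-dimensional. Write $L_0 := \mathrm{span}(\sstCone{\bd})$, so $\dim L_0 = d$ and every $W_\be = \sstCone{\be} \cap \sstCone{\bd-\be}$ is a closed convex cone contained in $\sstCone{\bd} \subseteq L_0$. Since $\Sigma$ is determined by its walls, its maximal cones are $d$-dimensional and every cone is a face of a maximal one; hence the relative interiors of the maximal cones are exactly the full-dimensional GIT classes, and their complement in $\sstCone{\bd}$ is precisely $\bigcup_i \eta_i$. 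A parameter $\theta$ has full-dimensional GIT class if and only if it is $L_0$-generic, so applying \cref{t:main} with $L = L_0$ gives
\[
\bigcup_i \eta_i = \bigl\{\, \theta \in \sstCone{\bd} : \theta \in W_\be \text{ for some nonzero proper } \be \text{ with } L_0 \not\subseteq H_\be \,\bigr\},
\]
using that $L_0$ meets the hyperplane $H_\be$ transversely exactly when $L_0 \not\subseteq H_\be$. Both sides of the desired identity are finite unions of closed cones, hence closed.

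For the inclusion $\bigcup_i \eta_i \subseteq \bigcup\{W_\be : \dim W_\be = d-1\}$, I fix a facet $\eta_i$ and a point $\theta \in \mathrm{relint}(\eta_i)$; the display produces $\be$ with $\theta \in W_\be$ and $L_0 \not\subseteq H_\be$, so $\dim(H_\be \cap L_0) = d-1$. The crucial step is to spread this membership over the whole facet. Since $\mathrm{relint}(\eta_i)$ is a single GIT class and, by \cref{lem:walls}, $\theta$ carries a $\theta$-semistable representation $M = M' \oplus M''$ with $\dim M' = \be$ and $\dim M'' = \bd - \be$, every $\theta' \in \mathrm{relint}(\eta_i)$ has the same semistable locus, so $M$ is $\theta'$-semistable; testing the subrepresentations $M'$ and $M''$ forces $\theta'(\be) \leq 0$ and $\theta'(\bd-\be) \leq 0$, whence $\theta'(\be) = 0$ and $\theta' \in W_\be$. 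Thus $\mathrm{relint}(\eta_i) \subseteq W_\be$, so $\mathrm{span}(\eta_i) \subseteq \mathrm{span}(W_\be) \subseteq H_\be \cap L_0$; comparing the dimensions $d-1 \leq \dim W_\be \leq d-1$ forces $\dim W_\be = d-1$, and closedness gives $\eta_i = \overline{\mathrm{relint}(\eta_i)} \subseteq W_\be$.

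For the reverse inclusion I take $\be$ with $\dim W_\be = d-1$ and show $W_\be \subseteq \bigcup_i \eta_i$. If $L_0 \not\subseteq H_\be$ this is immediate from the display, since then every $\theta \in W_\be$ appears in the indexing set on the right. The main obstacle is the degenerate case $L_0 \subseteq H_\be$, where $\theta(\be) = 0$ holds identically on $\sstCone{\bd}$ and no line in $L_0$ is transverse to $H_\be$, so \cref{t:main} says nothing. Here I argue directly: if some $\theta \in W_\be$ were $L_0$-generic, lying in the relative interior of a maximal cone $\sigma$, then the representation $M = M' \oplus M''$ supplied by \cref{lem:walls} would be $\theta'$-semistable for every $\theta' \in \mathrm{relint}(\sigma)$ by GIT equivalence; since $\theta'(\be) = 0$ automatically on $L_0$ and $M$ has a subrepresentation of dimension $\be$, every such $\theta'$ would lie in $W_\be$, forcing $\dim W_\be \geq \dim \sigma = d$ and contradicting $\dim W_\be = d-1$. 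Hence no point of $W_\be$ is $L_0$-generic, so $W_\be \subseteq \bigcup_i \eta_i$.

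I expect this degenerate case $L_0 \subseteq H_\be$ — morally the vectors $\be$ proportional to $\bd$ within the span — to be the only genuinely delicate point, since it is exactly where the transversality dictionary of \cref{t:main} breaks down and one must instead exploit that a codimension-one wall cannot contain a full-dimensional GIT class. The remainder is the dimension bookkeeping of the spreading argument and the characterisation of $\bigcup_i \eta_i$ via $L_0$-genericity.
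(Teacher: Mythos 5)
Your proof is correct and follows essentially the same route as the paper, whose own two-sentence proof likewise deduces the statement from \cref{t:main} combined with the fact that the GIT fan is determined by its walls. Your write-up is in fact more careful than the paper's: the spreading argument (via \cref{lem:walls} and constancy of the semistable locus on a GIT class) showing each facet lies in a \emph{single} codimension-one $W_\be$, and the direct treatment of walls with $\mathrm{span}(\sstCone{\bd}) \subseteq H_\be$, where the transversality criterion of \cref{t:main} is silent, make explicit precisely the points the paper's terse proof glosses over.
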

\begin{proof}
Since $\Sigma$ is determined by its walls, from \cref{t:main} we have that the union of the facets of the maximal cones of $\Sigma$ is equal to the union of $W_\be$ for proper nonzero $\be \leq \bd$. This is equal to the union of $W_\be$ for $\be \leq \bd$ satisfying $\dim W_\be = \dim \sstCone{\bd}-1$ since $\Sigma$ is determined by its walls.
\end{proof}

Finally, observe that a polyhedral cone $\omega$ and a collection of hyperplanes determine
a unique fan that is determined by its walls.
For $\be \leq \bd$, let $W_{\overline{\be}}$ be the union over $k \in \mathbb{Z}_{>0}$
of the walls $W_{k\be}$ for which $k\be \leq \bd$
and $\dim W_{k\be} = \dim \sstCone{\bd}-1$.
\footnote{A priori, $W_{\overline{\be}}$ may not be convex.}
We have proved the following effective algorithm for computing the GIT fan of $(Q, \bd)$.
\begin{corollary}\label{C:fan}
The following algorithm computes the GIT fan of $(Q, \bd)$:
\begin{enumerate}
\item Let $\Sigma'$ be the fan determined by its walls arising from the cone $\sstCone{\bd}$ and hyperplanes $H_\be$ for those indivisible $\be$ such that $W_{\overline{\be}}$ is nonempty.
\item Let $\Sigma$ be the fan obtained from $\Sigma'$ by merging all the full-dimensional cones that share a facet contained in $H_\be$ but not in $W_{\overline{\be}}$.
\end{enumerate}
Then the GIT fan of $(Q, \bd)$ is equal to $\Sigma$.
\end{corollary}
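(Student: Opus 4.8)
The plan is to identify the maximal cones produced by the algorithm with those of the GIT fan, using that both fans are determined by their walls and share the support $\sstCone{\bd}$. First I would record the shape of the GIT fan. By \cite[Proposition~5.2]{AH:09} it is determined by its walls, so its maximal cones are the closures of the connected components of $\sstCone{\bd}$ with its codimension one walls deleted. By \cref{c:walls} these walls are exactly the sets $\wall{\be}$ with $\dim\wall{\be} = \dim\sstCone{\bd} - 1$; grouping each such $\be$ with its indivisible part shows that their union equals $\bigcup_{\be}\wall{\overline{\be}}$, the union running over indivisible $\be$ with $\wall{\overline{\be}}$ nonempty. Hence the maximal cones of the GIT fan are the closures of the connected components of $\sstCone{\bd}\setminus\bigcup_{\be}\wall{\overline{\be}}$.

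Next I would unwind the two steps of the algorithm. By construction $\Sigma'$ is cut out of $\sstCone{\bd}$ by the full hyperplanes $H_\be$, so its maximal cones are the closures of the connected components of $\sstCone{\bd}\setminus\bigcup_{\be}H_\be$. Since $\wall{\overline{\be}}\subseteq H_\be$, deleting the hyperplanes deletes at least as much as deleting the walls, so each maximal cone of $\Sigma'$ lies in a maximal cone of the GIT fan and $\Sigma'$ refines it. It then remains to show that the merging in Step 2 reassembles the maximal cones of $\Sigma'$ into precisely the maximal cones of the GIT fan.

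I would argue this at the level of facets. The relative interior of a facet $\tau$ of $\Sigma'$ meets only one of the hyperplanes used in Step 1, since two distinct hyperplanes intersect in codimension two, so the condition that $\tau$ is contained in $H_\be$ but not in $\wall{\overline{\be}}$ is unambiguous and says exactly that $\tau$ is not a codimension one wall of the GIT fan. Two maximal cones of $\Sigma'$ sharing the facet $\tau$ therefore lie in the same connected component of $\sstCone{\bd}\setminus\bigcup_{\be}\wall{\overline{\be}}$ if and only if $\tau$ is merged in Step 2. Since a generic path between two points of $\sstCone{\bd}$ avoids the codimension two strata of the arrangement, any two maximal cones of $\Sigma'$ lying in a common component are linked by a chain of such facet crossings. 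Hence the transitive closure of the merging relation partitions the maximal cones of $\Sigma'$ exactly according to the connected components above, the supports of the maximal cones of $\Sigma$ agree with those of the GIT fan, and convexity of the merged regions, so that $\Sigma$ is a genuine fan, follows from this identification. Uniqueness of the fan determined by a cone together with a collection of hyperplanes then gives that $\Sigma$ is the GIT fan of $(Q,\bd)$.

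I expect the main obstacle to be the bookkeeping in the last step: matching the local, facet by facet merging rule to the global notion of connected component of $\sstCone{\bd}\setminus\bigcup_{\be}\wall{\overline{\be}}$. The delicate points are that each facet of $\Sigma'$ lies in the relative interior of a single hyperplane, so that membership in $\wall{\overline{\be}}$ is well defined there even though $\wall{\overline{\be}}$ itself may be nonconvex, and that crossings through codimension two strata never spuriously reconnect two distinct GIT cones.
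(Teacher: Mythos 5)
Your proposal is correct and is essentially the paper's argument unpacked: the paper's entire proof observes that the GIT fan and $\Sigma$ are both determined by their walls and that, by \cref{c:walls}, the unions of the facets of their maximal cones coincide, which is exactly the identification you carry out by hand by realizing the maximal cones of both fans as closures of connected components of $\sstCone{\bd}\setminus\bigcup_{\be} W_{\overline{\be}}$ and checking that the merging in Step~2 reassembles the cones of $\Sigma'$ accordingly. The one delicate point you flag but do not actually prove --- that the relative interior of a facet of $\Sigma'$ lying in $H_\be$ is either wholly contained in or disjoint from $W_{\overline{\be}}$ (your appeal to distinct hyperplanes meeting in codimension two only pins down which $H_\be$ contains the facet, not that the facet cannot straddle the boundary of the possibly nonconvex set $W_{\overline{\be}}$ inside $H_\be$) --- is likewise left implicit in the paper's two-line proof, so your write-up matches, and in the connectivity bookkeeping exceeds, the paper's level of detail.
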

\begin{proof}
The GIT fan and $\Sigma$ are both determined by their walls, and by \cref{c:walls} the union of the facets of the maximal cones for these two fans is identical.
\end{proof}

\section{Criterion for existence of a geometric phase}

In this section we use our description of the walls $W_\be$ to characterize pairs $(Q, \bd)$ having a geometric phase.
Recall that a dimension vector $\bd$ is a \textit{Schur root}
if a general representation of dimension vector $\bd$ is indecomposable,
and it is \textit{indivisible} if~$\gcd(\bd)=1$.
\begin{theorem}\label{T:phase}
Let $Q$ be a quiver and $\bd$ a dimension vector.
The following are equivalent.
\begin{itemize}
\item[(i)] The pair $(Q, \bd)$ has a geometric phase.
\item[(ii)] The dimension vector $\bd$ is a Schur root and indivisible.
\item[(iii)] The cone $\sstCone{\bd}$ spans $\stabSpace{\bd}$, and every wall $W_\be$ spans a proper subspace of $\stabSpace{\bd}$.
\end{itemize}
\end{theorem}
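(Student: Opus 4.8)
The plan is to reformulate (i) using \cref{c:artin} and then run the cycle (i) $\Rightarrow$ (ii) $\Rightarrow$ (iii) $\Rightarrow$ (i). By \cref{c:artin}, a parameter $\theta \in \sstCone{\bd}$ admits a strictly semistable representation precisely when $\theta \in \wall{\be}$ for some nonzero proper $\be \leq \bd$. Hence $(Q,\bd)$ has a geometric phase if and only if some $\theta \in \sstCone{\bd}$ lies in none of the finitely many walls $\wall{\be}$; for such a $\theta$ the nonempty set $\repspace[\semistable{\theta}]{\bd}$ consists entirely of stable points. I will also establish a scaling statement: $\sstCone{m\bd} = \sstCone{\bd}$ for every integer $m \geq 1$. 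The inclusion $\sstCone{\bd} \subseteq \sstCone{m\bd}$ is clear, since an $m$-fold direct sum of a $\theta$-semistable representation is $\theta$-semistable by \cref{lem:exact}. For the reverse inclusion I would use Schofield's characterisation \cite{Schofield:92} that $\be \hookrightarrow \bd$ holds iff the generic value of $\dim\operatorname{Ext}^1$ between representations of dimensions $\be$ and $\bd-\be$ vanishes. Since $\operatorname{Ext}^1(A^{\oplus m}, B^{\oplus m}) \cong \operatorname{Ext}^1(A,B)^{\oplus m^2}$ and the generic value of $\dim\operatorname{Ext}^1$ is the minimum over the family, vanishing of generic $\operatorname{Ext}^1$ for $(\be, \bd-\be)$ forces it for $(m\be, m\bd-m\be)$; thus $\be \hookrightarrow \bd$ implies $m\be \hookrightarrow m\bd$, so every defining inequality of $\sstCone{\bd}$ from \cref{t:semistable_generic} is implied by those of $\sstCone{m\bd}$, giving $\sstCone{m\bd} \subseteq \sstCone{\bd}$.

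For (i) $\Rightarrow$ (ii), take $\theta_0 \in \sstCone{\bd}$ avoiding all walls. Since then every $\theta_0$-semistable representation is stable and $\repspace[\semistable{\theta_0}]{\bd}$ is nonempty, a $\theta_0$-stable $M$ exists. A standard argument shows any $\theta_0$-stable $M$ is a brick: a nonzero endomorphism has no proper nonzero kernel or image by the strict stability inequalities, hence is an isomorphism, so $\operatorname{End}(M)=k$ and $M$ is indecomposable. As $\repspace[\stable{\theta_0}]{\bd}$ is open and nonempty in the irreducible space $\repspace{\bd}$, it is dense, so the general representation of dimension $\bd$ is a brick, whence $\bd$ is a Schur root. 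For indivisibility, suppose $\bd=k\bd'$ with $k\geq 2$; by the scaling statement $\theta_0 \in \sstCone{\bd'}$, so there is a $\theta_0$-semistable $M'$ of dimension $\bd'$, and $M'^{\oplus k}$ is then strictly semistable of dimension $\bd$ (it has the proper subrepresentation $M'$ with $\theta_0(M')=0$), contradicting that $\theta_0$ avoids all walls.

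For (ii) $\Rightarrow$ (iii), the Schur root hypothesis yields, via \cite{King:94} (using that the general representation of a Schur root is a brick), a parameter $\theta_1$ admitting a $\theta_1$-stable representation $M$. As $M$ has only finitely many dimension vectors of proper nonzero subrepresentations, each satisfying a strict inequality $\theta_1(\cdot)<0$, the same $M$ remains stable for all $\theta$ in a neighbourhood of $\theta_1$ in $\stabSpace{\bd}$; hence this neighbourhood lies in $\sstCone{\bd}$, which is therefore full-dimensional. Indivisibility ensures that for every nonzero proper $\be \leq \bd$ the vector $\be$ is not a rational multiple of $\bd$, so $H_\be$ is a proper hyperplane of $\stabSpace{\bd}$; since $\wall{\be} \subseteq H_\be$, each wall spans a proper subspace. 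This is exactly (iii).

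Finally, (iii) $\Rightarrow$ (i) is a dimension count: the finitely many walls $\wall{\be}$ lie in proper linear subspaces of $\stabSpace{\bd}$, so their union is nowhere dense, whereas the full-dimensional convex cone $\sstCone{\bd}$ has nonempty interior. Thus some $\theta \in \sstCone{\bd}$ avoids every wall, giving a geometric phase by the reformulation above. I expect the main obstacle to be the existence direction invoked in (ii) $\Rightarrow$ (iii)—that a Schur root admits a stable representation for some stability parameter—which I take from \cite{King:94}; the remaining steps are elementary given the scaling statement and \cref{c:artin,lem:walls}.
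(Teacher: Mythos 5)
Your proof is correct, and it follows the paper's skeleton---the same cycle (i) $\Rightarrow$ (ii) $\Rightarrow$ (iii) $\Rightarrow$ (i), the same reformulation of a geometric phase via \cref{c:artin}, and the same saturation statement $\sstCone{k\bd}=\sstCone{\bd}$, which is the paper's \cref{L:saturation} and is proved there essentially as you do (direct sums plus Schofield's ext-criterion, with your semicontinuity argument for $\operatorname{Ext}^1$ of direct sums playing the role of \cref{R:ext-linear})---but the two substantive implications are argued differently. For (i) $\Rightarrow$ (ii) the paper works contrapositively and proves something stronger: if $\bd$ is divisible or not a Schur root, then a single wall $W_\be$ equals \emph{all} of $\sstCone{\bd}$ (divisible case via \cref{L:saturation}, non-Schur case via the generic decomposition together with \cref{L:3.3}); your direct argument---stable implies brick, density of the stable locus gives a Schur root, and $M'^{\oplus k}$ is strictly semistable at $\theta_0$ if $\bd=k\bd'$---is shorter and avoids \cref{L:3.3}, but does not yield that covering statement, which is the dichotomy highlighted in \cref{ex:nonquiver}. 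For (ii) $\Rightarrow$ (iii) the paper is deliberately self-contained: it exhibits the explicit canonical parameter $\theta_\bd(\bf)=\langle\bd,\bf\rangle-\langle\bf,\bd\rangle$, stable by Schofield's Theorem~6.1 since a general representation admits only subrepresentations of generic subdimension vectors, whereas you outsource the existence of $\theta_1$ to King's Proposition~4.4. That is legitimate---the paper itself notes the overlap with King---but it is precisely the step the authors chose to reprove (they could not find it stated for quivers with oriented cycles and real parameters), and the canonical parameter is moreover computable; note also that King's weights are integral, which suffices since one integral $\theta_1$ is all you need. Your remaining steps (the neighbourhood of $\theta_1$ cut out by the finitely many strict inequalities from subrepresentations of one fixed stable $M$, the indivisibility argument forcing $\be\notin\Q\cdot\bd$, and the (iii) $\Rightarrow$ (i) dimension count, which implicitly uses convexity of $\sstCone{\bd}$ from \cref{t:semistable_generic} to pass from full span to nonempty interior) agree in substance with the paper's; the only compression worth flagging is that the brick argument still needs the eigenvalue step (subtract $\lambda\cdot\id$, using that $k$ is algebraically closed) to get $\operatorname{End}(M)=k$, though indecomposability alone already gives the Schur-root conclusion.
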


We note that large portions of \cref{T:phase} follow from King's results~\cite[Propositions~4.4 and~5.3]{King:94}, and both our formulation of this theorem and our proof rely heavily on King's ideas. We also point out that the equivalence of (i) and (iii) is a special property of VGIT problems coming from representations of quivers: the analogous statements for more general linear representations of complex reductive groups are not equivalent, as is evident from \cref{ex:nonquiver}.
Our proof uses the notion of \textit{generic ext}. 
Using a theorem of Schofield (see \cite[Theorem~5.4]{Schofield:92}),
we can define the quantity $\operatorname{ext}(\bd, \be)$
for the dimension vectors $\bd$ and $\be$ as
\begin{equation}\label{eq:ext}
\operatorname{ext}(\bd, \be) \colonequals \max_{\bd' \hookrightarrow \bd}\{-\langle\bd', \be \rangle \} = \max_{\be \hookrightarrow \be'}\{-\langle \bd, \be - \be' \rangle \}.
\end{equation}
\begin{remark}\label{R:ext-linear}
For all dimension vectors $\bd, \be$ and positive integers $k$ we have
\begin{equation}
k\operatorname{ext}(\bd, \be) = \operatorname{ext}(\bd, k\be) = \operatorname{ext}(k\bd, \be).
\end{equation}
This follows from the linearity of the Euler form $\langle \cdot, \cdot \rangle$ and the respective equalities in~\eqref{eq:ext}.
\end{remark}

The next lemma follows from results in~\cite{DW:00},
but since op. cit. works with integer-valued stability parameters
and quivers without oriented cycles we give a self-contained proof here.

\begin{lemma}\label{L:saturation}
For any positive integer $k$, we have the identity $\sstCone{\bd} = \sstCone{k\bd}$.
\end{lemma}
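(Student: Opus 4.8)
The plan is to prove the two inclusions separately, using the characterisation of the semistable cone from \cref{t:semistable_generic} for one direction and a direct construction of semistable representations for the other. As a preliminary observation, note that $\theta(k\bd) = k\,\theta(\bd)$, so $\stabSpace{\bd} = \stabSpace{k\bd}$ and both cones are subsets of the same vector space; this lets me compare them directly.

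For the inclusion $\sstCone{\bd} \subseteq \sstCone{k\bd}$, I would start from a $\theta$-semistable representation $M$ of dimension vector $\bd$, whose existence is guaranteed for $\theta \in \sstCone{\bd}$. The key point is that the direct sum $M^{\oplus k}$, which has dimension vector $k\bd$, is again $\theta$-semistable. This follows by induction on $k$ from \cref{lem:exact} applied to the short exact sequence $0 \to M \to M^{\oplus k} \to M^{\oplus(k-1)} \to 0$, using that $\theta(M) = 0$ for a semistable $M$. Hence $\repspace[\semistable{\theta}]{k\bd}$ is nonempty and $\theta \in \sstCone{k\bd}$.

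For the reverse inclusion $\sstCone{k\bd} \subseteq \sstCone{\bd}$, I would argue via \cref{t:semistable_generic}, which reduces the claim to showing that $\theta(\be) \leq 0$ for every generic subdimension vector $\be \hookrightarrow \bd$, given that $\theta(\bf) \leq 0$ for all $\bf \hookrightarrow k\bd$. The crucial step is the claim that $\be \hookrightarrow \bd$ implies $k\be \hookrightarrow k\bd$. I would deduce this from Schofield's criterion that $\be \hookrightarrow \bd$ if and only if $\operatorname{ext}(\be, \bd - \be) = 0$ (the theorem underlying \eqref{eq:ext}), together with \cref{R:ext-linear}: applying the linearity of generic ext twice gives $\operatorname{ext}(k\be, k\bd - k\be) = k^2\operatorname{ext}(\be, \bd - \be)$, which vanishes exactly when $\operatorname{ext}(\be, \bd - \be)$ does. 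Granting the claim, $k\be \hookrightarrow k\bd$ yields $\theta(k\be) \leq 0$, and dividing by $k$ gives $\theta(\be) \leq 0$, as required.

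The main obstacle is this last direction: unlike the direct-sum argument, one cannot simply ``divide'' a semistable representation of dimension $k\bd$ to obtain one of dimension $\bd$, so I must instead compare the two systems of linear inequalities cutting out the cones. Here the generic subdimension vectors of $k\bd$ need not all be multiples of those of $\bd$, which is why the argument routes through the inequalities coming only from the vectors $k\be$ with $\be \hookrightarrow \bd$; the content is precisely that these suffice, which in turn rests on the scaling behaviour of generic ext recorded in \cref{R:ext-linear}.
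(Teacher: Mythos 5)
Your proposal is correct and follows essentially the same route as the paper's own proof: the forward inclusion via $M^{\oplus k}$ and \cref{lem:exact}, and the reverse inclusion via \cref{t:semistable_generic}, Schofield's criterion that $\be \hookrightarrow \bd$ is equivalent to $\operatorname{ext}(\be,\bd-\be)=0$, and the scaling of generic ext from \cref{R:ext-linear}. Your explicit induction for $M^{\oplus k}$ and the $k^2$ bookkeeping for $\operatorname{ext}(k\be, k\bd-k\be)$ merely spell out steps the paper leaves implicit.
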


\begin{proof}
Let \(\theta \in \sstCone{\bd}\).
This means that there exists a \(\theta\)-semistable representation \(M\) of dimension vector \(\bd\).
Then \(M^{\oplus k}\), the direct sum of \(k\) copies of \(M\), is \(\theta\)-semistable by \cref{lem:exact}.
This shows that \(\theta \in \sstCone{k\bd}\).

For the converse inclusion, let \(\theta \in \sstCone{k\bd}\).
By \cref{t:semistable_generic}, it is enough to show that \(\theta(\be) \leq 0\) for all generic subdimension vectors \(\be\) of \(\bd\).
Let then \(\be\) be such a generic subdimension vector of \(\bd\).
By~\cite[Theorem~3.3]{Schofield:92},
$\be \hookrightarrow \bd$ is equivalent to \(\operatorname{ext}(\be,\bd-\be) = 0\),
which implies \(\operatorname{ext}(k\be,k\bd-k\be) = 0\)
by \cref{R:ext-linear}.
Hence \(k\be\) is a generic subdimension vector of \(k\bd\). As \(\theta \in \sstCone{k\bd}\), we have
\begin{equation}
0 \geq \theta(k\be) = k\theta(\be),
\end{equation}
and as \(k \neq 0\), we obtain \(\theta(\be) \leq 0\) as desired.
\end{proof}

\begin{proof}[Proof of \cref{T:phase}]
To show that (i) implies (ii), we will show that if either $\bd$ is divisible or not a Schur root, then there is some proper nonzero subdimension vector $\be$ of $\bd$ such that $W_\be = \sstCone{\bd}$. First assume $\bd$ is divisible, so $\bd = k\be$ for some integer $k \geq 2$. Then

\begin{equation}
W_\be = \sstCone{\be} \cap \sstCone{\bd - \be} =  \sstCone{\be} \cap \sstCone{(k-1)\be} = \sstCone{\bd}
\end{equation}
where the last equality holds by \cref{L:saturation}.
Now assume that $\bd$ is not a Schur root, so the generic representation of dimension vector $\bd$ decomposes as a direct sum of representations.
The dimension vectors of this decomposition give a pair $(\be, \bd-\be)$ of proper nonzero generic subdimension vectors of $\bd$, and hence
\begin{equation}
\sstCone{\bd} \subseteq \{\theta \in \stabSpace{\bd} \mid \theta(\be) \leq 0\text{ and } \theta(\bd - \be) \leq 0\} = H_\be.
\end{equation}
But $W_\be = H_\be \cap \sstCone{\bd}$ by \cref{L:3.3}, so $W_\be = \sstCone{\bd}$.

To show that (ii) implies (iii), assume that $\bd$ is indivisible and a Schur root.
Since $\bd$ is a Schur root, by \cite[Theorem~6.1]{Schofield:92}, for all proper nonzero $\be \hookrightarrow \bd$ we have
\begin{equation}\notag
\langle \bd, \be \rangle - \langle \be, \bd \rangle < 0.
\end{equation}
Hence for the so-called canonical stability parameter $\theta_{\bd} (\bf) \colonequals \langle \bd, \bf\rangle - \langle \bf, \bd \rangle$, a general representation of dimension vector $\bd$ is $\theta_{\bd}$-stable.\footnote{Note that $\theta_{\bd}(\bd) = 0$, so $\theta_\bd \in \stabSpace{\bd}$.}
Now let $B_\epsilon$ denote the open ball in $\stabSpace{\bd}$ centered at $\theta_\bd$ of radius $\epsilon$. We can choose $\epsilon$ small enough so that $\eta(\be)<0$ for all $\eta \in B_\epsilon$ and proper nonzero $\be \hookrightarrow \bd$.
In particular, a general representation of dimension vector $\bd$ is $\eta$-stable and hence $B_\epsilon \subset \sstCone{\bd}.$ It follows that $\sstCone{\bd}$ spans $\stabSpace{\bd}.$
Finally, to see that every wall $W_\be$ spans a proper subspace of $\stabSpace{\bd}$, suppose to the contrary that for some nonzero proper $\be \leq \bd$, the wall $W_\be$ spans $\stabSpace{\bd}$.
Then $H_\be = H_\bd$ and $k\be = \ell\bd$ for some coprime integers $k$, $\ell$.
Since $\bd$ is indivisible, we obtain $k = 1$, and since $\be \leq \bd$ we obtain $\ell=1$. So $\be$ is not a proper subdimension vector of $\bd$.

Finally, that (iii) implies (i) is immediate.
\end{proof}

\section{The wall system and special subdimension vectors}
\label{subsection:special-subdimension-vectors}
Recall that a representation of $Q$ of dimension vector $\bd$ is
\textit{stable} if it is $\theta$-stable for some $\theta \in \stabSpace{\bd}$.
\begin{definition}
A subdimension vector $\be \leq \bd$ is \textit{special} for $Q$ if
it is not generic and some stable representation of $Q$ of dimension vector $\bd$ has a subrepresentation of dimension vector $\be$.
\end{definition}
\begin{remark}
This agrees with the definition given in \cite[p.213]{HP:02} for acyclic quivers.
\end{remark}
\noindent
Note that if $\be \leq \bd$ is special, then $\bd - \be$ may or may not be special. \cref{ex:walls-strictly-contained-hyperplanes} shows that both phenomena occur.

The main result of this section is \cref{L:special}, which gives a computable characterization of special subdimension vectors of $(Q, \bd)$.

\begin{proposition}\label{L:special}
Let $(Q, \bd)$ be a quiver and dimension vector. If $\bd$ is not a Schur root of $Q$ then no special subdimension vectors exist. If $\bd$ is a Schur root, then $\be \leq \bd$ is special if and only if $\be$ is not generic and the cone
\begin{equation}
C_\be \colonequals \{ \theta \in \stabSpace{\bd} \mid \theta(\be') \leq 0 \text{ and } \theta(\bf' + \be) \leq 0\text{ for all }\be' \hookrightarrow \be \text{ and } \mathbf{f'}\overset{\neq}{\hookrightarrow}\bd-\be \}
\end{equation}
is not contained in any of its defining hyperplanes $H_{\be'}$ or $H_{\bf' + \be}$.
Here we write $\mathbf{f'}\overset{\neq}{\hookrightarrow}\bd-\be$ for a proper generic subdimension vector $\bf'$ of $\bd - \be$.
\end{proposition}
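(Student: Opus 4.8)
The plan is to reduce the statement to the existence of a stable representation, realize that representation as a generic extension, and then read the cone $C_\be$ off the subrepresentation structure of that extension; the last step is the delicate one.

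First I would record that a stable representation of dimension $\bd$ exists for some $\theta\in\stabSpace\bd$ if and only if $\bd$ is a Schur root. A $\theta$-stable representation is a simple object in the abelian category of $\theta$-semistable representations of slope $0$, hence a brick with $\Hom(M,M)=k$; conversely, the existence of a single brick of dimension $\bd$ forces the general representation to be a brick, by upper semicontinuity of $M\mapsto\dim\Hom(M,M)$ on $\repspace\bd$, so that $\bd$ is a Schur root, and then the canonical parameter $\theta_\bd$ makes the general representation stable exactly as in the proof of \cref{T:phase}. Since specialness of $\be$ requires a stable representation of dimension $\bd$ to exist at all, this already proves the first assertion: if $\bd$ is not a Schur root, no special subdimension vectors exist.

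Assume now that $\bd$ is a Schur root and $\be$ is not generic. Let $Z_\be\subseteq\repspace\bd$ be the closed, irreducible locus of representations admitting a subrepresentation of dimension $\be$, the image of the incidence variety of pairs $N\subseteq M$ with $\dim N=\be$; its generic point is a general extension $0\to N\to M\to P\to 0$ with $N$ general of dimension $\be$ and $P$ general of dimension $\bd-\be$. Because $\theta$-stability is open and $Z_\be$ is irreducible, some member of $Z_\be$ is $\theta$-stable if and only if the generic member is. Hence $\be$ is special precisely when there is a $\theta\in\stabSpace\bd$ for which this generic extension is $\theta$-stable, and any such $\theta$ satisfies $\theta(\be)<0$ since $N$ is a proper nonzero subrepresentation.

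The core of the argument, and the step I expect to be the main obstacle, is to determine the $\theta$ for which the generic extension $M$ is stable and to match this set with $C_\be$. Two families of subrepresentations are immediate: those contained in $N$, with dimension vectors the generic subdimension vectors $\be'\hookrightarrow\be$, and those containing $N$, which are preimages of subrepresentations of $P$ and hence have dimension vectors $\be+\bf'$ with $\bf'\hookrightarrow\bd-\be$. Imposing $\theta\le0$ on these places $\theta$ in $C_\be$, and stability asks for the strict inequality on each proper nonzero member, i.e. that $\theta$ avoid the hyperplanes $H_{\be'}$ and $H_{\bf'+\be}$. What must still be shown is that there are no further constraints, which amounts to classifying the dimension vectors of all subrepresentations of the \emph{generic} extension. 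This is where Schofield's theory enters: using the generic $\operatorname{ext}$ of~\eqref{eq:ext} together with the criterion $\be'\hookrightarrow\be\Leftrightarrow\operatorname{ext}(\be',\be-\be')=0$, one controls which ``mixed'' subrepresentations --- those meeting $N$ in a proper nonzero piece and surjecting onto a proper nonzero subrepresentation of $P$ --- are realizable for a general extension class, and shows that none of them destabilizes $M$ when $\theta\in C_\be$ lies off the defining hyperplanes. I expect this realizability analysis, rather than the formal bookkeeping, to be the real work.

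Finally I would convert the existence of a suitable $\theta$ into the stated condition by a convexity argument. There are finitely many defining hyperplanes and $C_\be$ is a convex cone, so a point of $C_\be$ lying off all of them exists if and only if $C_\be$ is contained in none of them: choosing $\theta_H\in C_\be\setminus H$ for each defining hyperplane $H$, the sum $\sum_H\theta_H$ lies in $C_\be$ and, evaluated against the linear form cutting out any $H'$, is at most its value at $\theta_{H'}$, which is strictly negative; hence the sum avoids every defining hyperplane. Combined with the reduction above, this gives that $\be$ is special if and only if $\be$ is not generic and $C_\be$ is not contained in any of its defining hyperplanes $H_{\be'}$ or $H_{\bf'+\be}$.
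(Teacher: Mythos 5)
Your opening paragraph and your reduction to the generic point of $Z_\be$ are sound: the first assertion via semicontinuity of $\dim\Hom(M,M)$ is a different but valid route (the paper argues more directly that a non-Schur $\bd$ admits a pair of generic subdimension vectors $(\be,\bd-\be)$, so a $\theta$-stable representation would need $\theta(\be)<0$ and $\theta(\bd-\be)<0$ simultaneously, which is impossible). Likewise, your two ``immediate'' families of subrepresentations do dispose of the direction ``special $\Rightarrow$ criterion'': stability forces strict inequalities on those families, hence a point of $C_\be$ off all defining hyperplanes. The genuine gap is the converse, which you explicitly defer: you assert that Schofield's generic-ext theory ``controls which mixed subrepresentations are realizable'' and that none destabilizes, but you prove neither, and the bookkeeping really does fail without a new input. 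A mixed subrepresentation $N$ with $\dim(N\cap M')=\be'$ and $\dim(N/(N\cap M'))=\bf'$ imposes the condition $\theta(\be')+\theta(\bf')<0$, which does \emph{not} follow from the cone inequalities $\theta(\be')<0$ and $\theta(\bf'+\be)<0$; in the extreme case $\be'=\mathbf{0}$, a lift of a proper nonzero subrepresentation of the quotient demands $\theta(\bf')<0$, a condition the definition of $C_\be$ nowhere records. So the realizability analysis you flag as ``the real work'' is not a technicality: without it, or a substitute, the plan does not close.

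The paper closes this step by two devices you should compare with your plan. First, instead of a fully generic point of $Z_\be$, it fixes $M'\in\repspace{\be}$ and $M''\in\repspace{\bd-\be}$ admitting \emph{only} subrepresentations of generic subdimension vectors (such representations exist by the open-dense argument in the proof of \cref{t:semistable_generic}), so that for an \emph{arbitrary} subrepresentation $N$ of the extension one automatically has $\be'\hookrightarrow\be$ and $\bf'\hookrightarrow\bd-\be$ by \cref{t:generic_equivalent}; no classification of which pairs $(\be',\bf')$ actually occur for a generic extension class is ever attempted. Second, it does not take $\theta$ merely off the defining hyperplanes of $C_\be$: it takes $\theta$ satisfying the stronger simultaneous strict inequalities \eqref{eq:inequalities}, namely $\theta(\be')<\theta(\be)$ and $\theta(\bf')<\theta(\bd-\be)$, after which additivity alone gives $\theta(N)=\theta(\be')+\theta(\bf')<\theta(\be)+\theta(\bd-\be)=0$ for every mixed $N$, nontriviality of the extension being used to exclude $\bf'=\bd-\be$. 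Note that your concluding convexity argument only produces a point of $C_\be$ with all defining inequalities strict, which is genuinely weaker than \eqref{eq:inequalities}: knowing $\theta(\be')<0$ and $\theta(\be)<0$ does not order $\theta(\be')$ against $\theta(\be)$. If you pursue your route, you must either actually prove the realizability statement for generic extension classes via $\operatorname{ext}$, or adopt the paper's stronger choice of $\theta$ --- and in the latter case also verify, as the paper does when invoking non-genericity of $\be$, that a nonsplit extension of $M''$ by $M'$ exists, since a split generic point of $Z_\be$ is never stable.
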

\begin{proof}
First assume that $\bd$ is not a Schur root.
Then no stable representations exist, so no $\be \leq \bd$ can be special.
Indeed, if $\bd$ is not a Schur root then we have a pair
of proper nonzero generic subdimension vectors $(\be, \bd - \be)$ of $\bd$,
and hence if $M$ is a $\theta$-stable representation for some $\theta \in \stabSpace{\bd}$,
we must have both $\theta(\be) < 0 $ and $\theta(\bd - \be)< 0$, which is impossible.
For the remainder of the proof we assume $\bd$ is a Schur root.

Assume that $\be$ is not generic and that $C_\be$ is not contained
in any of its defining hyperplanes $H_{\be'}$ or $H_{\bf' + \be}$.
Then we can find $\theta \in C_\be$ such that $\theta(\be) < 0$, and moreover
\begin{equation}\label{eq:inequalities}
\theta(\be') < \theta(\be) \quad\quad\text{   and   } \quad\quad\theta(\bf') < \theta(\bd - \be)
\end{equation}
for all $\mathbf{e'}\overset{\neq}{\hookrightarrow}\be$ and $\mathbf{f'}\overset{\neq}{\hookrightarrow}\bd-\be$.
To show that $\be$ is special, it is enough to construct a~$\theta\text{\hyphen}$stable representation
of dimension vector $\bd$ with a subrepresentation of dimension vector $\be$.
Let $M' \in \repspace{\be}$ and $M'' \in \repspace{\mathbf{d-e}}$
be representations that only admit subrepresentations of
generic subdimension vectors.
Since $\be$ is not a generic subdimension vector of $\bd$,
we can pick a nontrivial extension $M \in \repspace{\bd}$ of $M''$ by $M'$.
Then $M$ has dimension vector $\bd$ and it admits a subrepresentation $M'$ of dimension vector $\be$.
We claim that $M$ is $\theta$-stable, and hence $\be$ is a special subdimension vector.
To see this, let $N \subset M$ be a proper nonzero subrepresentation.
Since $M$ is a nontrivial extension, $N$ is not equal to $M''$,
and our choices of $M'$ and $M''$ ensure that
\begin{equation}
\dim (N \cap M') = \be'\quad\quad \text{   and   }\quad\quad \dim(N/(N \cap M')) = \bf'
\end{equation}
for some $\mathbf{e'}{\hookrightarrow}\be$ and $\mathbf{f'}\overset{\neq}{\hookrightarrow}\bd-\be$.
Then $\theta(N)<0$ follows from additivity of $\theta$ and \eqref{eq:inequalities}.

Conversely, if $\be$ is generic it is not special.
Assume then that $\be$ is not generic and that $C_\be$ is contained
in a defining hyperplane $H_{\be'}$ or $H_{\bf' + \be}$,
for some $\be'\hookrightarrow\be$ or
for some $\mathbf{f}'\overset{\neq}{\hookrightarrow}\bd-\be$.
This is equivalent to saying that for every $\theta \in \stabSpace{\bd}$
one of the following holds:
\begin{itemize}
\item[(i)]there is some $\be'\hookrightarrow \be$ such that $\theta(\be') \geq 0$, or
\item[(ii)] there is some $\mathbf{f'}\overset{\neq}{\hookrightarrow}\bd-\be$ such that $\theta(\bf'+\be) \geq 0$.
\end{itemize}
Now let $M$ be a representation of dimension vector $\bd$ and $M' \subset M$ be a subrepresentation of dimension vector $\be$.
To show that $\be$ is not special we must show that $M$ is not~$\theta\text{\hyphen}$stable for any $\theta \in \stabSpace{\bd}$.
For any $\be'\hookrightarrow\be$, the representation $M'$ has a subrepresentation of dimension vector $\be'$
and hence $M$ does as well, so if (i) holds then $M$ is not stable.
Similarly, for any $\bf'\hookrightarrow\bd - \be$, the representation $M/M'$
has a subrepresentation of dimension vector $\bf'$, and its preimage in $M$
is a subrepresentation of dimension vector $\bf' + \bd$,
so if (ii) holds then $M$ is not stable either.
\end{proof}
It is shown in \cite[Lemma~3.3]{HP:02} that each hyperplane in the wall system
of $(Q, \bd)$ is orthogonal to a special subdimension vector.
We can use \cref{L:special} to show that the converse does not hold;
that is, if $\be$ is special, $H_{\be}$ need not be a hyperplane in the wall system.
This occurs in \cref{ex:1,ex:walls-strictly-contained-hyperplanes}.

\section{Examples: a gallery of walls}\label{sec:examples}

In this section, we present several examples illustrating
various phenomena that can and can not occur
in a GIT fan of a quiver and dimension vector.

The methods described in this paper
for computing the walls $W_\be$ of $(Q, \bd)$
as well as its GIT fan have been implemented
in \software{SageMath} and \software{Julia}
using the \software{QuiverTools} package \cite{quivertools}.
The code is available at \cite{walls-and-chambers-implementation},
and can reproduce all the results in this section.
To our knowledge,
this is the most convenient method
for computing \cref{ex:non-flag,ex:mutation,ex:segre-cubic}.

The GIT fan in \cref{ex:walls-strictly-contained-hyperplanes}
can also be computed with
the \software{Macaulay2} package \software{ThinSincereQuivers} \cite{ThinSincereQuivers}.

\begin{example}\label{ex:1}
It is possible that $W_\be = H_\be$ and it is also possible that $W_\be \subset H_\be$ is the origin. Also, not every special subdimension vector determines a hyperplane in the wall system.
Let $(Q, \bd)$ be the quiver and dimension vector pictured below left.

\noindent
\begin{minipage}{.5\textwidth}
\centering
\begin{tikzcd}[column sep = 5pt]
1 \arrow[r, bend left] & 2 \arrow[l, bend left] \arrow[r, bend left] & 3 \arrow[l, bend left]\\
& \bd = (1,1,1)
\end{tikzcd}
\end{minipage}%
\begin{minipage}{0.5\textwidth}
\centering
\begin{tikzpicture}[scale=.65]
\draw[yellow, fill=yellow, opacity=.25] (-1.8,-1.8) rectangle (1.8,1.8);
\draw[<->, gray, opacity=.5, line width=0.5mm] (-2,0)--(2,0);
\draw[<->, gray, opacity=.5, line width=0.5mm] (0,-2)--(0,2);
\draw[<->, gray, opacity=.5, line width=0.5mm] (-2, 2)--(2, -2);
\draw[<->, blue, line width=0.5mm] (0,-2)--(0,2);
\draw[<->, blue, line width=0.5mm] (-2,0)--(2,0);
\node at (2.3, 0) {$a'$};
\node at (0, 2.3) {$c'$};
\end{tikzpicture}
\end{minipage}

\noindent
Since the definition of $W_\be$ is symmetric in $\be$ and $\bd-\be$, we see that there are three walls: $W_{(1,1,0)}$ and $W_{(0,1,1)}$ both have dimension $1$, while $W_{(1,0,1)}$ has dimension $0$.
Write $(a, b, c)$ for a point in $\R^{Q_0}$. Then $\stabSpace{\mathbf{\bd}}$ is the locus where $a+b+c=0$. We draw this 2-dimensional real vector space at right in the basis $a' = (1,-1,0)$ and $c' = (0, -1, 1)$. The yellow region is $\sstCone{\mathbf{\bd}}$ (it is all of $\stabSpace{\mathbf{\bd}}$), the grey lines are the hyperplanes $H_\be$, and the blue lines are the walls $W_\be$. Finally, every subdimension vector is special for $(Q, \bd)$, but $H_{(1,0,1)}$ does not belong to the wall system.
\end{example}

\begin{example}
\label{ex:walls-strictly-contained-hyperplanes}\label{ex:cycle}
It is possible that $W_\be$ is a proper subset of $H_\be$ but that $W_\be$ still spans $H_\be$, even when the quiver is acyclic. Also, not every special subdimension vector determines a hyperplane in the wall system, and if $\be$ is a special subdimension vector then $\bd - \be$ may or may not be. Let $(Q, \bd)$ be the quiver and dimension vector pictured below left.

\begin{minipage}{.35\textwidth}
   \centering
      \begin{tikzpicture}

\node (1)                   {1};
\node (2) [below of = 1]    {2};
\node (3) [right of = 2]    {3};
\node (4) [above of = 3]    {4};

\draw[->] (1) edge (2);
\draw[->] (2) edge (3);
\draw[->] (3) edge (4);
\draw[->] (1) edge (3);
\draw[->] (1) edge (4);
\end{tikzpicture}\\
\vspace{.1in}

$\bd = (1,1,1,1)$
    \end{minipage}%
    \begin{minipage}{0.4\textwidth}
    \centering
        \tdplotsetmaincoords{70}{145}
\begin{tikzpicture}
[scale=3, tdplot_main_coords,
axis/.style={->, black},
ray/.style={->, black, very thick},
wall/.style={opacity=0.3, thick, fill=yellow},
inner wall/.style={opacity=0.4, thick, fill=blue}
]

\coordinate (O) at (0,0,0);

\coordinate (r1) at (1, 0, 0){};
\coordinate (r2) at (0.5, 0.5, 0.5){};
\coordinate (r3) at (0, 1, 0){};
\coordinate (r4) at (0.25, -0.25, 0.25){};
\coordinate (r5) at (-0.5, -0.5, 0.5){};


\draw[ray] (O) -- (r1) node[above left] {$r_1$}; 
\draw[ray] (O) -- (r3) node[below] {$r_3$}; 
\draw[ray] (O) -- (r4) node[above] {$r_4$}; 
\draw[ray] (O) -- (r5) node[right] {$r_5$}; 

\draw[wall] (O) -- (r1) -- (r3) -- cycle;
\draw[wall] (O) -- (r3) -- (r5) -- cycle;
\draw[wall] (O) -- (r1) -- (r5) -- cycle;

\draw[ray] (O) -- (r2) node[above] {$r_2$}; 

\draw[inner wall] (O) -- (r1) -- (r2) -- cycle;
\draw[inner wall] (O) -- (r2) -- (r3) -- cycle;
\draw[inner wall] (O) -- (r2) -- (r5) -- cycle;
\draw[inner wall] (O) -- (r2) -- (r4) -- cycle;

\end{tikzpicture}
\end{minipage}
\begin{minipage}{0.2\textwidth}
$r_1 = (1, -1, 0, 0)$

$r_2 = (1, 0, 0, -1)$

$r_3 = (0, 0, 1, -1)$

$r_4 = (1, 0, -1, 0)$

$r_5 = (0, 1, -1, 0)$
\end{minipage}

\noindent
The GIT fan for $(Q, \bd)$ is pictured on the right: the yellow region is $\sstCone{\bd}$
(it is a strictly convex polyhedral cone of dimension $3$) and the blue polygons
are the walls $W_\be$.
For $\be$ equal to $(0,1,1,0)$ or $(0,0,1,0)$ we see that $W_\be \subsetneq H_\be$.
Moreover, the special subdimension vectors for $(Q, \bd)$ are
\begin{equation*}
\{\;
(0, 1, 0, 0),\;
(0, 0, 1, 0),\;
(0, 1, 1, 0),\;
(1, 0, 0, 1),\;
(0, 1, 0, 1),\;
(1, 1, 0, 1),\;
(1, 0, 1, 1)\;
\}.
\end{equation*}
Hence $\be = (0,1,0,1)$ is a special subdimension vector such that $\bd - \be$ is not special. Also, $H_\be$ is not in the wall system.
\end{example}

\begin{example}\label{ex:3}
It is possible that $W_\be$ is all of $\sstCone{\bd}$: let $Q$ be the flag quiver
\[
\begin{tikzcd}
1 \arrow[r] \arrow[r, shift left=2] \arrow[r, shift right=2] & 2 \arrow[r] & 3
\end{tikzcd}.
\]
Several different dimension vectors and the corresponding walls are pictured below.

\noindent
\begin{center}
\begin{minipage}{0.24\textwidth}
\centering
\begin{tikzpicture}[scale=.6]
\draw[yellow, fill=yellow, opacity=.25] (-3.3,-3.3) rectangle (0, 0);
\draw[<->, gray, opacity=.5, line width=0.5mm] (-3.5,0)--(0.5,0);
\draw[<->, gray, opacity=.5, line width=0.5mm] (0,-3.5)--(0,0.5);
\draw[->, blue, line width=0.5mm] (0,0)--(0,-3.5);
\draw[<-, blue, line width=0.5mm] (-3.5,0)--(0,0);
\node at (.8, 0) {$b$};
\node at (0, .8) {$c$};
\node at (-1.5, -4) {$\bd = (1,2,1)$};
\end{tikzpicture}
\end{minipage}
\begin{minipage}{0.24\textwidth}
\centering
\begin{tikzpicture}[scale=.6]
\draw[gray, fill=blue, opacity=.25] (-3.3,-3.3) rectangle (0, 0);
\draw[<->, gray, opacity=.5, line width=0.5mm] (-3.5,0)--(0.5,0);
\draw[<->, gray, opacity=.5, line width=0.5mm] (0,-3.5)--(0,0.5);
\draw[->, blue, line width=0.5mm] (0,0)--(0,-3.5);
\draw[<-, blue, line width=0.5mm] (-3.5,0)--(0,0);
\node at (.8, 0) {$b$};
\node at (0, .8) {$c$};
\node at (-1.5, -4) {$\bd = (2,4,2)$};
\end{tikzpicture}
\end{minipage}
\begin{minipage}{0.24\textwidth}
\centering
\begin{tikzpicture}[scale=.6]
\draw[<->, gray, opacity=.5, line width=0.5mm] (-3.5,0)--(0.5,0);
\draw[<->, gray, opacity=.5, line width=0.5mm] (0,-3.5)--(0,0.5);
\draw[<-, blue, line width=0.5mm] (-3.5,0)--(0,0);
\node at (.8, 0) {$b$};
\node at (0, .8) {$c$};
\node at (-1.5, -4) {$\bd = (1,1,2)$};
\end{tikzpicture}
\end{minipage}
\begin{minipage}{0.24\textwidth}
\centering
\begin{tikzpicture}[scale=.6]
\draw[<->, gray, opacity=.5, line width=0.5mm] (-3.5,0)--(0.5,0);
\draw[<->, gray, opacity=.5, line width=0.5mm] (0,-3.5)--(0,0.5);
\draw [blue, fill=blue] (0,0) circle [radius=.15];
\node at (.8, 0) {$b$};
\node at (0, .8) {$c$};
\node at (-1.5, -4) {$\bd = (1,5,7)$};
\end{tikzpicture}
\end{minipage}
\end{center}
\noindent
Write $(a, b,c)$ for a point in $\R^{Q_0}$. The cone $\sstCone{\bd}$ is drawn in yellow and the walls are drawn in blue for various choices of $\bd$, after projection along the $a$-axis. Note that for the last three choices of $\bd$, every point of $\sstCone{\bd}$ is in a wall.
\end{example}

\begin{example}\label{ex:non-flag}
It is possible that the walls are proper subsets of $\sstCone{\bd}$,
even for examples that are not toric or quiver flag varieties.
One such $(Q, \bd)$ is pictured below left (but see also \cref{ex:mutation}).

\noindent
\begin{minipage}{.49\textwidth}
\centering
\begin{tikzcd}[row sep=small, column sep = large]
&2\\
1 \arrow[ur,shift left] \arrow[ur, shift right] \arrow[dr] \arrow[dr, shift left=2] \arrow[dr, shift right=2] \\
& 3
\end{tikzcd}
\[\bd =  (2,2,3)\]
\end{minipage}%
\begin{minipage}{0.49\textwidth}
    \centering
        \begin{tikzpicture}[scale=.65]
\draw[yellow, fill=yellow, opacity=.25] (-3.3,-3.3) rectangle (0, 0);
\draw[<->, gray, opacity=.5, line width=0.5mm] (-3.5,0)--(0.5,0);
\draw[<->, gray, opacity=.5, line width=0.5mm] (0,-3.5)--(0,0.5);
\draw[->, blue, line width=0.5mm] (0,0)--(0,-3.5);
\draw[<-, blue, line width=0.5mm] (-3.5,0)--(0,0);
\draw [->, blue, line width=0.5mm] (0,0)--(-3.5, -2.33);
\draw [->, blue, line width=0.5mm] (0,0)--(-1.75, -3.5);
\node at (.8, 0) {$b$};
\node at (0, .8) {$c$};
\end{tikzpicture}
\end{minipage}

\noindent
Write $(a, b, c)$ for a point in $\R^{Q_0}$. Then $\stabSpace{\mathbf{\bd}}$ is the locus where $2a+2b+3c=0$. We draw this 2-dimensional real vector space at right after projection along the $a$-axis. The yellow region is $\sstCone{\mathbf{d}}$ (it is one quadrant) and the blue lines are the walls $W_\be$: they are the rays through $(-1, 0), (-3, -2), (-1,-2)$, and $(0, -1)$.
\end{example}

\begin{example}\label{ex:mutation}
The behavior of the GIT fan under quiver mutation is relevant for Seiberg duality
(see \cite{Zh:22, Zh:25}, and e.g. \cite[Definition~1.1]{Zh:22} for the definition of mutation for a quiver with dimension vector).
The sequence of quivers and dimension vectors below was obtained by mutating at the vertex labeled 3 in each case.
The arrows in each quiver are labeled with their multiplicity.
\[
\begin{tikzcd}[column sep = 3pt]
& 1 \arrow[dr, "3"] \\
3  \arrow[ur,"3"] && 2 \arrow[ll, "3"]\\
&\bd = (1,1,1)
\end{tikzcd}
\begin{tikzcd}[column sep = 3pt]
& 1 \arrow[dl, "3"'] \\
2  \arrow[rr,"3"'] && 3 \arrow[ul, "6"']\\
&\bd = (1,2,1)
\end{tikzcd}
\begin{tikzcd}[column sep = 3pt]
& 1 \arrow[dr, "6"] \\
3  \arrow[ur,"15"] && 2 \arrow[ll, "3"]\\
&\bd = (1,5,2)
\end{tikzcd}
\begin{tikzcd}[column sep = 3pt]
& 1 \arrow[dl, "15"'] \\
2  \arrow[rr,"3"'] && 3 \arrow[ul, "39"']\\
&\bd = (1,13,5)
\end{tikzcd}
\]
In each case, we write $(a, b, c)$ for a point in $\mathbb{R}^{Q_0}$
and draw the walls in $\sstCone{\bd}$ after projection along the $a$-axis.
The respective wall diagrams are as follows.

\noindent
\begin{minipage}{0.24\textwidth}
\centering
\begin{tikzpicture}[scale=.6]
\draw[yellow, fill=yellow, opacity=.25] (-1,-3.3) rectangle (3.3, 1);
\draw[<->, gray, opacity=.5, line width=0.5mm] (-1.2,0)--(3.5,0);
\draw[<->, gray, opacity=.5, line width=0.5mm] (0,-3.5)--(0,1.2);
\draw[<-, blue, opacity=.5, line width=0.5mm] (-1.2, 0)--(0,0);
\draw[<-, blue, line width=0.5mm] (0,1.2)--(0,0);
\draw[->, blue, line width=0.5mm] (0,0)--(3.5,-3.5);
\node at (4, 0) {$b$};
\node at (0, 1.5) {$c$};
\end{tikzpicture}
\end{minipage}
\begin{minipage}{0.25\textwidth}
\centering
\begin{tikzpicture}[scale=.6]
\draw[yellow, fill=yellow, opacity=.25] (-1,-3.3) rectangle (3.3, 1);
\draw[<->, gray, opacity=.5, line width=0.5mm] (-1.2,0)--(3.5,0);
\draw[<->, gray, opacity=.5, line width=0.5mm] (0,-3.5)--(0,1.2);
\draw[<-, blue, opacity=.5, line width=0.5mm] (-1.2, 0)--(0,0);
\draw[<->, blue, line width=0.5mm] (0,1.2)--(0,-3.5);
\draw[->, blue, line width=0.5mm] (0,0)--(1.75,-3.5);
\node at (4, 0) {$b$};
\node at (0, 1.5) {$c$};
\end{tikzpicture}
\end{minipage}
\begin{minipage}{0.25\textwidth}
\centering
\begin{tikzpicture}[scale=.6]
\draw[yellow, fill=yellow, opacity=.25] (-1,-3.3) rectangle (3.3, 1);
\draw[<->, gray, opacity=.5, line width=0.5mm] (-1.2,0)--(3.5,0);
\draw[<->, gray, opacity=.5, line width=0.5mm] (0,-3.5)--(0,1.2);
\draw[<-, blue, opacity=.5, line width=0.5mm] (-1.2, 0)--(0,0);
\draw[<->, blue, line width=0.5mm] (0,1.2)--(0,-3.5);
\draw[->, blue, line width=0.5mm] (0,0)--(1.75,-3.5);
\draw[->, blue, line width=0.5mm] (0,0)--(1.167,-3.5);
\draw[->, blue, line width=0.5mm] (0,0)--(1.4,-3.5);
\node at (4, 0) {$b$};
\node at (0, 1.5) {$c$};
\end{tikzpicture}
\end{minipage}
\begin{minipage}{0.24\textwidth}
\centering
\begin{tikzpicture}[scale=.6]
\draw[yellow, fill=yellow, opacity=.25] (-1,-3.3) rectangle (3.3, 1);
\draw[<->, gray, opacity=.5, line width=0.5mm] (-1.2,0)--(3.5,0);
\draw[<->, gray, opacity=.5, line width=0.5mm] (0,-3.5)--(0,1.2);
\draw[<-, blue, opacity=.5, line width=0.5mm] (-1.2, 0)--(0,0);
\draw[<->, blue, line width=0.5mm] (0,1.2)--(0,-3.5);
\draw[->, blue, line width=0.5mm] (0,0)--(3.5,-3.5);
\draw [-, blue, line width=0.5mm] (0,0)--(1,-2);
\draw [-, blue, line width=0.5mm] (0,0)--(1.2,-1.9);
\draw [-, blue, line width=0.5mm] (0,0)--(.8,-2.1);
\draw [-, blue, line width=0.5mm] (0,0)--(1.5,-1.9);
\draw[->, blue, line width=0.5mm] (0,0)--(1,-3.5);
\node at (4, 0) {$b$};
\node at (2, -3) {\color{blue}\Huge ... };
\node at (0, 1.5) {$c$};
\end{tikzpicture}
\end{minipage}

\noindent
From left to right, the walls are the rays through the following points:
\[\begin{array}{l}
\{(0,1),(-1,0),(1,-1)\}\\
\{(0,1),(-1,0),(0,-1),(1,-2)\}\\
\{(0,1),(-1,0),(0,-1),(1,-3),(1,-2),(2,-5)\}\\
\{(0,1),(-1,0),(0,-1),(1,-3),(1,-2),(1,-1),(2,-5),(3,-7),(3,-8),(5,-13)\}.
\end{array}\]
\end{example}

\begin{example}\label{ex:nonquiver}
For a quiver and dimension vector $(Q, \bd)$,
either the walls do not span $\sstCone{\bd}$,
or the walls contain all of $\sstCone{\bd}$.
The analogous statement is not true for the Geometric Invariant Theory (GIT) fan
determined by an arbitrary representation of a general linear group.
For instance, take \(G \colonequals GL(2)\) and $V \colonequals \mathbb{C}[x,y]_3 \oplus M_{2 \times 4}$,
where $\mathbb{C}[x,y]_3$ is the space of homogeneous polynomials
of degree $3$ and $M_{2 \times 4}$ is the space of $2 \times 4$ matrices.
Let $G$ act on $\mathbb{C}[x,y]_3$ via its natural action on the space of functions
on $\mathbb{A}^2$ and let $G$ act on $M_{2 \times 4}$ by left multiplication.
The additional choice of a character $\theta$ of $GL(2)$
determines open loci $V^{\stable{\theta}} \subseteq V^{\semistable{\theta}} \subseteq V$
of stable and semistable points, respectively (see e.g.~\cite[Definition~2.1]{King:94}).
The GIT fan for $(G, V)$ is pictured below: it is a fan in the character lattice of $GL(2)$,
tensored with $\mathbb{R}$,
with one cone for each of the possible values of $V^{\semistable{\theta}}$.
\[\begin{tikzpicture}[scale=.6]
      \draw[yellow, fill=yellow, opacity=.25] (-6,-0.2) rectangle (6, 0.2);
\draw[<->, gray, opacity=.35, line width=0.5mm] (-6,0)--(6,0);
\draw [blue, fill=blue] (0,0) circle [radius=.15];
\draw [<-, blue, line width=0.5mm] (-6, 0)--(0,0);
\draw [->, gray, line width=0.5mm] (0, 0)--(6,0);
\draw [-] (-3,-.3)--(-3, .3);
\draw [-] (3,-.3)--(3, .3);
\draw [-] (0,-.3)--(0, .3);
\node at (-3,-1) {$\theta(g) = \det(g)^{-1}$};
\node at (3,-1) {$\theta(g) = \det(g)$};
\node at (0, 1) {$\theta(g) = Id$};
\end{tikzpicture}
\]
In this case there are three cones:
a $0$- and $1$-dimensional cones where~$V^{\semistable{\theta}}\neq\,V^{\stable{\theta}}$
(the ``walls'', colored blue), and a $1$-dimensional cone
where $V^{\semistable{\theta}} \neq V^{\stable{\theta}}$.
In particular the support of the GIT cone of this representation
is the whole line (shaded yellow)
and the walls are a proper subset whose span is the whole semistable cone.
It follows from the equivalence of (i) and (iii) in \cref{T:phase}
that this fan could never be the GIT fan for a quiver representation.
\end{example}

\begin{example}
\label{ex:segre-cubic}
For the 6-subspaces quiver and the dimension vector $\bd$ pictured below,
it is known (see \cite[p.994]{MR4352662}) that when choosing
the canonical stability parameter $\theta_{\bd}~=~(2,2,2,2,2,2,-6)$,
the resulting quiver moduli is the Segre cubic.

\vspace{0.2cm}
\noindent
\begin{minipage}{0.5\textwidth}
\centering
\begin{tikzpicture}
\node (1)                 {1};
\node (2) [right of = 1]  {2};
\node (3) [right of = 2]  {3};
\node (4) [right of = 3]  {4};
\node (5) [right of = 4]  {5};
\node (6) [right of = 5]  {6};
\node (7) [below of = 4]  {7};

\draw[->] (1) edge (7);
\draw[->] (2) edge (7);
\draw[->] (3) edge (7);
\draw[->] (4) edge (7);
\draw[->] (5) edge (7);
\draw[->] (6) edge (7);
\end{tikzpicture}
\end{minipage}
\begin{minipage}{0.45\textwidth}
\[\bd = (1, 1, 1, 1, 1, 1, 2)\]
\end{minipage}
\vspace{0.2cm}

\noindent
The support of the GIT fan is the polyhedral cone $\sstCone{\bd}$ of dimension $6$, with $12$ external facets and $15$ rays.
This support is partitioned into a polyhedral fan
with $1678$ $6$-dimensional cones
by $25$ walls of codimension $1$,
and $\theta_{can}$ lays on $10$ of them.
The entire GIT fan has $f_i$ cones of dimension $i - 1$,
for
\begin{equation}
f = (1, 142, 1455, 5200, 8310, 6102, 1678).
\end{equation}
Computations were performed using \software{QuiverTools} and
the \software{Oscar} algebra system,
see~\cite{walls-and-chambers-implementation}.
\end{example}

\printbibliography

@misc{Zh:22,
      title={Gromov-Witten Theory of $A_n$ type quiver varieties and Seiberg Duality},
      author={Yingchun Zhang},
      year={2022},
      eprint={2112.11812},
      archivePrefix={arXiv},
      primaryClass={math.AG},
}

@misc{Zh:25,
      title={Seiberg Duality conjecture for star-shaped quivers and finiteness of Gromov-Witten thoery for D-type quivers},
      author={Weiqiang He and Yingchun Zhang},
      year={2025},
      eprint={2302.02402},
      archivePrefix={arXiv},
      primaryClass={math.AG},
}

@software{ThinSincereQuivers,
    author = {Barker, Mary and Gallardo, Patricio},
     title = {{ThinSincereQuivers: Construction of flow polytopes and their associated quivers. Version~0.1}},
       url = {https://github.com/Macaulay2/M2/tree/master/M2/Macaulay2/packages},
}

@software{walls-and-chambers-implementation,
    author = {Franzen, Hans and Petrella, Gianni and Webb, Rachel},
     title = {Walls and Chambers methods},
       url = {https://github.com/giannipetrella/walls-and-chambers-implementation},
}

@misc{quivertools-paper,
     title = {The QuiverTools package for SageMath and Julia},
     author= {Belmans, Pieter and Franzen, Hans and Petrella, Gianni},
      year = {2025},
     eprint= {2506.19432},
    archivePrefix={arXiv},
    primaryClass={math.AG},
}

@misc{BP:21,
     title = {Quivers and moduli of their thin sincere representations in Macaulay2},
    author = {Barker, Mary and Gallardo, Patricio},
      year = {2021},
    eprint = {2106.03288},
archivePrefix = {arXiv},
 primaryClass = {math.AG},
}

@incollection{Hille:03,
    AUTHOR = {Hille, Lutz},
     TITLE = {Quivers, cones and polytopes},
      NOTE = {Special issue on linear algebra methods in representation
              theory},
   JOURNAL = {Linear Algebra Appl.},
  FJOURNAL = {Linear Algebra and its Applications},
    VOLUME = {365},
      YEAR = {2003},
     PAGES = {215--237},
      ISSN = {0024-3795,1873-1856},
   MRCLASS = {16G10 (14M25 52B12)},
  MRNUMBER = {1987339},
       DOI = {10.1016/S0024-3795(02)00406-8},
}

@software{quivertools,
    author = {Belmans, Pieter and Franzen, Hans and Petrella, Gianni},
     title = {QuiverTools},
       url = {https://quiver.tools},
       doi = {10.5281/zenodo.1268022},
}

@article{MR4352662,
    AUTHOR = {Franzen, Hans and Reineke, Markus and Sabatini, Silvia},
     TITLE = {Fano quiver moduli},
   JOURNAL = {Canad. Math. Bull.},
  FJOURNAL = {Canadian Mathematical Bulletin. Bulletin Canadien de
              Math\'ematiques},
    VOLUME = {64},
      YEAR = {2021},
    NUMBER = {4},
     PAGES = {984--1000},
      ISSN = {0008-4395,1496-4287},
   MRCLASS = {16G20 (14D20 14J45)},
  MRNUMBER = {4352662},
MRREVIEWER = {Amin\ Gholampour},
       DOI = {10.4153/S0008439520001009},
}

@article{AH:09,
    AUTHOR = {Arzhantsev, Ivan V. and Hausen, J\"urgen},
     TITLE = {Geometric invariant theory via {C}ox rings},
   JOURNAL = {J. Pure Appl. Algebra},
  FJOURNAL = {Journal of Pure and Applied Algebra},
    VOLUME = {213},
      YEAR = {2009},
    NUMBER = {1},
     PAGES = {154--172},
      ISSN = {0022-4049,1873-1376},
   MRCLASS = {14L24 (14L30)},
  MRNUMBER = {2462993},
MRREVIEWER = {Amnon\ Neeman},
       DOI = {10.1016/j.jpaa.2008.06.005},
}

@article{CG:19,
    AUTHOR = {Chindris, Calin and Granger, Valerie},
     TITLE = {G{IT}-equivalence and semi-stable subcategories of quiver
              representations},
   JOURNAL = {J. Pure Appl. Algebra},
  FJOURNAL = {Journal of Pure and Applied Algebra},
    VOLUME = {223},
      YEAR = {2019},
    NUMBER = {8},
     PAGES = {3499--3514},
      ISSN = {0022-4049,1873-1376},
   MRCLASS = {16G20 (13A50 14L24)},
  MRNUMBER = {3926224},
MRREVIEWER = {Charles\ Almeida},
       DOI = {10.1016/j.jpaa.2018.11.014},
}

@misc{Chindris:08,
      title={Notes on GIT-fans for quivers},
      author={Calin Chindris},
      year={2008},
      eprint={0805.1440},
      archivePrefix={arXiv},
      primaryClass={math.RT},
}

@article{DW:00,
    AUTHOR = {Derksen, Harm and Weyman, Jerzy},
     TITLE = {Semi-invariants of quivers and saturation for
              {L}ittlewood-{R}ichardson coefficients},
   JOURNAL = {J. Amer. Math. Soc.},
  FJOURNAL = {Journal of the American Mathematical Society},
    VOLUME = {13},
      YEAR = {2000},
    NUMBER = {3},
     PAGES = {467--479},
      ISSN = {0894-0347},
   MRCLASS = {16G20 (13A50 20G05)},
  MRNUMBER = {1758750},
MRREVIEWER = {C. M. Ringel},
       DOI = {10.1090/S0894-0347-00-00331-3},
}

@article{HP:02,
    AUTHOR = {Hille, Lutz and de la Pe\~na, Jos\'e{} Antonio},
     TITLE = {Stable representations of quivers},
   JOURNAL = {J. Pure Appl. Algebra},
  FJOURNAL = {Journal of Pure and Applied Algebra},
    VOLUME = {172},
      YEAR = {2002},
    NUMBER = {2-3},
     PAGES = {205--224},
      ISSN = {0022-4049,1873-1376},
   MRCLASS = {16G20},
  MRNUMBER = {1906875},
MRREVIEWER = {Stanis\l aw\ Kasjan},
       DOI = {10.1016/S0022-4049(01)00167-0},
}

@article{IPT:15,
    AUTHOR = {Ingalls, Colin and Paquette, Charles and Thomas, Hugh},
     TITLE = {Semi-stable subcategories for {E}uclidean quivers},
   JOURNAL = {Proc. Lond. Math. Soc. (3)},
  FJOURNAL = {Proceedings of the London Mathematical Society. Third Series},
    VOLUME = {110},
      YEAR = {2015},
    NUMBER = {4},
     PAGES = {805--840},
      ISSN = {0024-6115,1460-244X},
   MRCLASS = {16G20},
  MRNUMBER = {3335288},
MRREVIEWER = {Yuming\ Liu},
       DOI = {10.1112/plms/pdv002},
}

@article{King:94,
    AUTHOR = {King, Alastair D.},
     TITLE = {Moduli of representations of finite-dimensional algebras},
   JOURNAL = {Quart. J. Math. Oxford Ser. (2)},
  FJOURNAL = {The Quarterly Journal of Mathematics. Oxford. Second Series},
    VOLUME = {45},
      YEAR = {1994},
    NUMBER = {180},
     PAGES = {515--530},
      ISSN = {0033-5606},
     CODEN = {QJMAAT},
   MRCLASS = {16G10 (14D25)},
  MRNUMBER = {1315461 (96a:16009)},
       DOI = {10.1093/qmath/45.4.515},
}

@article{Schofield:92,
    AUTHOR = {Schofield, Aidan},
     TITLE = {General representations of quivers},
   JOURNAL = {Proc. London Math. Soc. (3)},
  FJOURNAL = {Proceedings of the London Mathematical Society. Third Series},
    VOLUME = {65},
      YEAR = {1992},
    NUMBER = {1},
     PAGES = {46--64},
      ISSN = {0024-6115},
     CODEN = {PLMTAL},
   MRCLASS = {16G20 (16D70)},
  MRNUMBER = {1162487 (93d:16014)},
MRREVIEWER = {Zygmunt Pogorza{\l}y},
       DOI = {10.1112/plms/s3-65.1.46},
}

\vfill

\emph{Hans Franzen}, \url{hans.franzen.math@gmail.com} \\

\emph{Gianni Petrella}, \url{gianni.petrella@uni.lu} \\
Department of Mathematics, Universit\'{e} de Luxembourg, 2, Avenue de l'Universit\'{e}, 4365 Esch-sur-Alzette, Luxembourg \\

\emph{Rachel Webb}, \url{r.webb@cornell.edu} \\
Department of Mathematics, Cornell University, 310 Malott Hall, Ithaca, NY 14853, United States
\end{document}